\theoremstyle{plain}
\newtheorem{theorem}{Theorem}[section]
\newtheorem{lemma}[theorem]{Lemma}
 \theoremstyle{remark}
\newtheorem{remark}[theorem]{Remark}
\theoremstyle{definition}
\newtheorem{definition}[theorem]{Definition}
\newtheorem{algorithm}[theorem]{Algorithm}
\numberwithin{equation}{section}
\def\sub{\subseteq}
\def\DynkinArrowLength{3mm}
\def\C{\mathbb C}
\def\F{\mathbb F}
\def\T{\mathbb T}
\def\Z{\mathbb Z}
\def\bB{\mathbf B}
\def\bG{\mathbf G}
\def\bH{\mathbf H}
\def\bT{\mathbf T}
\def\bU{\mathbf U}
\def\cA{\mathcal A}
\def\cD{\mathcal D}
\def\cI{\mathcal I}
\def\cJ{\mathcal J}
\def\cK{\mathcal K}
\def\cL{\mathcal L}
\def\cP{\mathcal P}
\def\cR{\mathcal R}
\def\cS{\mathcal S}
\def\cT{\mathcal T}
\def\cZ{\mathcal Z}
\def\rA{\mathrm A}
\def\rB{\mathrm B}
\def\rC{\mathrm C}
\def\rD{\mathrm D}
\def\rE{\mathrm E}
\def\rF{\mathrm F}
\def\rG{\mathrm G}
\def\rR{\mathrm R}
\def\rS{\mathrm S}
\def\rX{\mathrm X}
\def\fC{\mathfrak C}
\def\fO{\mathfrak O}
\def\fS{\mathfrak S}
\def\ua{{\underline a}}
\def\ub{{\underline b}}
\def\uc{{\underline c}}
\def\Ind{\operatorname{Ind}}
\def\Inf{\operatorname{Inf}}
\def\Irr{\operatorname{Irr}}
\def\Tr{\operatorname{Tr}}
\title{Constructing characters of Sylow $p$-subgroups of finite Chevalley groups}
\author{Simon M. Goodwin, Tung Le, Kay Magaard and Alessandro Paolini}
\address{School of Mathematics, University of Birmingham,
Birmingham, B15 2TT, U.K.} \email{s.m.goodwin@bham.ac.uk} \email{k.magaard@bham.ac.uk} \email{axp282@bham.ac.uk}
\address{Department of Mathematics, North-West University, Mafikeng 2735, South Africa}
\email{lttung96@yahoo.com}
\begin{document}

\begin{abstract}
Let $q$ be a power of a prime $p$, let $G$ be a finite Chevalley group over $\mathbb{F}_q$ and let $U$ be a
Sylow $p$-subgroup of $G$; we assume that $p$ is not a very bad prime for $G$.
We explain a procedure of reduction of irreducible complex characters of $U$, which leads to an algorithm
whose goal is to obtain a parametrization of the irreducible characters of $U$
along with a means to construct these characters as induced characters.
A focus in this paper is determining the parametrization when $G$ is of type $\rF_4$, where we observe
that the parametrization is ``uniform'' over good primes $p > 3$, but differs for the bad prime
$p = 3$.  We also explain how it has been applied for all groups of rank $4$ or less.
\end{abstract}

\maketitle

\section{Introduction}\label{sec:intr}
Let $q$ be a power of a prime $p$, and let $G$ be a finite Chevalley group over $\mathbb{F}_q$ and let $U$ be a
Sylow $p$-subgroup of $G$.  We assume that $p$ is not a very bad prime for $G$; recall that this means
that $p > 2$ if $G$ is of type $\rB_r$, $\rC_r$ or $\rF_4$, and $p > 3$ if $G$ is of type $\rG_2$.

We study the representation
theory of $U$ with the aim of determining a parametrization of the irreducible characters of $U$
and a means to construct them as induced characters of linear characters of certain subgroups.
Our principal tool for achieving this is a method of successively reducing characters to smaller subquotients
of $U$, which
leads to an algorithm whose goal is to determine the irreducible characters of $U$.
An outline of this algorithm is given below and explained more fully in Section \ref{sec:nonc}.

A focus of this paper is to obtain the parametrization in case $G$ is of type $\rF_4$, as stated in the following theorem.

\begin{theorem}
Let $q$ be a power of an odd prime $p$ and let $G$ be a finite Chevalley group over $\mathbb{F}_q$ of type $\rF_4$.
The irreducible characters of $U$ are completely parameterized in Table \ref{tab:f4}.  Moreover,
each character can be obtained as an induced character of a linear character of a certain subgroup
that can be determined from the information in Table \ref{tab:f4}.
\end{theorem}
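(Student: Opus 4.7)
The plan is to apply the reduction procedure described in Section \ref{sec:nonc} systematically to the Sylow $p$-subgroup $U$ of $G = \rF_4(q)$; recall that $|U| = q^{24}$, since $\rF_4$ has $24$ positive roots. The reduction at each stage replaces the task of parametrizing a portion of $\Irr(U)$ by a collection of analogous tasks on proper subquotients, together with prescriptions for constructing the relevant characters as induced characters of linear characters. Running the algorithm to its conclusion produces a tree of branches whose leaves are exactly the entries of Table \ref{tab:f4}.

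Concretely, I would fix once and for all an ordering of the positive roots of $\rF_4$ compatible with the lower central series of $U$, and name the corresponding root subgroups $X_\alpha$. The algorithm then picks a top root $\beta$ and splits $\Irr(U)$ according to how $X_\beta$ acts by central characters: on the kernel branch one descends to $U/X_\beta$, while on each non-trivial branch one exploits the Chevalley commutator relations to identify a proper subgroup $V \le U$ with a linear character $\lambda$ whose induction $\Ind_V^U \lambda$ is irreducible. Each branch opens new sub-branches under the same procedure, and terminates once only abelian quotients remain. The output is a table indexed by the choices made at the branching steps, each entry recording the inducing subgroup, the linear character (a tuple of parameters in $\F_q$ subject to certain non-vanishing conditions), and the resulting degree — all polynomial in $q$.

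Completeness of the parametrization will then be verified via the identity
\[
\sum_\chi \chi(1)^2 = |U| = q^{24},
\]
the sum ranging over the characters listed in Table \ref{tab:f4}, each counted with the multiplicity coming from the number of admissible parameter choices. Since every $\chi(1)$ and every multiplicity is a polynomial in $q$, this reduces to a polynomial identity in $q$ that can be checked once and for all. The pairwise distinctness of the constructed characters is built into the reduction procedure, since at each branch different inducing data give characters distinguished by their restrictions to the root subgroups handled so far.

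The main obstacle, and the phenomenon highlighted in the statement, is the bad prime $p = 3$. Several of the Chevalley structure constants relevant to $\rF_4$ have a factor of $3$, so commutators that are non-trivial for $p > 3$ degenerate to the identity when $p = 3$; correspondingly, certain subquotients of $U$ that are non-abelian for $p > 3$ become abelian at $p = 3$, and vice versa for the relevant twisted commutator identities. The hard part will be to redo precisely those branches where such a structure constant enters, producing the separate $p = 3$ entries in Table \ref{tab:f4}, and then to re-verify the degree-sum identity in this case. Once this is done, the uniformity claim for $p > 3$ and the deviation at $p = 3$ both drop out of the bookkeeping.
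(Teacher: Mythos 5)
Your outline captures the broad strategy the paper actually uses (partition by which root subgroups are central but not in the kernel, then reduce via inflation--induction bijections), but it has a genuine gap at the decisive point: you assume the branching procedure always terminates in abelian quotients, with each branch handled by inducing a linear character from a subgroup read off directly from the Chevalley commutator relations. For $\rF_4$ this is false. The reduction of Lemma \ref{lem:small} and the splitting of Lemma \ref{lem:split} get stuck on six \emph{nonabelian cores}, and these are precisely where the content of the theorem lies. Handling them (Section \ref{sec:cors}) requires subgroups that are \emph{not} products of root subgroups --- one-parameter subgroups such as $x_{1,4,7}(t)=x_1(a_{10}^2t)x_4(a_5a_{13}t)x_7(a_5a_{10}t)$ cut out by linear conditions depending on the central character $\lambda^{\ua}$ --- together with the orbit/stabilizer analysis of \S\ref{sub:analyse} and the $3$-dimensional groups $V_f$ of \S\ref{sub:3dim}. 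In particular, for $p=3$ three of these cores produce irreducible characters of degree $q^4/3$, which is not a power of $q$ (nor a polynomial in $q$); no induction of a linear character from a pattern-type subgroup determined by the commutator relations can have that degree, so your scheme as described cannot produce a complete list at the bad prime. Your diagnosis that structure constants divisible by $3$ cause "nonabelian subquotients to become abelian" is also not what happens: the relevant groups stay nonabelian, but the bilinear form in \eqref{eq:commu} degenerates, making the radical $Y'$ nontrivial and forcing the $V_f$ analysis with $f(s,t)=(s^p-ds)t$, whose nonlinear characters have degree $q/p$.

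A secondary issue is your completeness argument. Checking $\sum_\chi\chi(1)^2=q^{24}$ is only a consistency test: it presupposes that every listed induced character is irreducible and that the list is repetition-free, and those are exactly the claims that need proof (your assertion that distinctness is "built into the procedure" does not survive the nonabelian cores, where the labels involve parameters on non-root subgroups such as $b_{1,4,7}$ and $\F_p$-valued parameters $c_1,c_2$). The paper avoids this entirely: the reduction lemma (Lemma \ref{lem:rl}, specialised in Lemma \ref{lem:small} and Lemma \ref{pr:reduction}) delivers at each step a \emph{bijection} between the character set being analysed and that of a smaller subquotient, so irreducibility, distinctness and completeness all come for free once the cores are resolved, and Theorem \ref{thm:1go} collapses the resulting chain of inflations and inductions into a single induction of a linear character. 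You would need to import that mechanism, or an equivalent one, before the degree-sum identity certifies anything.
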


As explained later in the introduction, the parametrization is ``uniform'' over all primes $p > 3$.
However, we observe significant
differences in the parametrization for the bad prime $p=3$.  These differences shed light on why the prime $p=3$ is bad for $G$ of type $\rF_4$.
In particular, we observe that for $p > 3$ all characters have degree $q^d$ for some $d \in \Z_{\ge 0}$, whereas for
$p = 3$ there are characters of degree $q^4/3$.  We note that similar behaviour for certain characters of $U$ when $G$
is of type $\rE_6$ (for $p =3$) or $\rE_8$ (for $p=5$) has previously been observed in \cite{LM2}.

We have also used our algorithm to determine a parametrization of the irreducible characters of $U$
for classical types up to rank 4.
We emphasise that our algorithm gives a construction of each character as an induced character
from a character of a certain subgroup of $U$, which gives a means to calculate the
values of these characters, see Theorem \ref{thm:1go}.  In fact for $G$ of rank 4 or less, we
obtain that each irreducible character of $U$ can be obtained by inducing a linear character.
In addition, we remark that our labelling of the irreducible characters is amenable to the action of
a maximal torus and also the field automorphisms; thus it would be straightforward to
determine these actions explicitly.

The methods in this paper
develop those used by Himstedt and the second and third authors in \cite{HLMd4} and \cite{HLMsr},
and make significant further progress.
A full parametrization  of the irreducible characters of $U$ for $G$ of type
$D_4$ and every prime $p$ is given in \cite{HLMd4}.
The so called single root minimal degree almost faithful irreducible characters
are parameterized for every type and rank when $p$ is not a very bad prime for $G$ in \cite{HLMsr}.

The approach used in those papers and this paper is built on partitioning the irreducible
characters of $U$ in terms of the root subgroups that lie in their centre, but not in their kernel.
Consequently, there are similarities to the theory of supercharacters, which were first
studied for the case $G$ is of type $\rA$ by Andr\'e, see for example \cite{An}.
This theory was fully developed by Diaconis and Isaacs in \cite{DI}.
Subsequently it was applied to the characters of $U$ for $G$ of types $\rB$, $\rC$ and $\rD$ by Andr\'e and
Neto in \cite{AN}.

Another approach to the character theory of $U$ is via the Kirillov orbit method, which
is applicable for $p$ greater than the Coxeter number of $G$.
In \cite{GMR2}, Mosch, R\"ohrle and the first author explain an algorithm for parameterizing the
coadjoint orbits of $U$, which was applied for $G$ of rank at most $8$, except $\mathrm{E}_8$; through the
Kirillov orbit method this leads to a parametrization of the irreducible characters of $U$.
This was preceded by an algorithm to determine the conjugacy classes
of $U$, see \cite{GMR1}.

We note that a reduction procedure for algebra groups similar to ours was
given by Evseev in \cite{Ev} and builds on work of Isaacs in \cite{Icount}.
For $G = \mathrm{SL}_n(q)$, this led to a parametrization of the irreducible characters of $U$ for $n \le 13$.
Also recently Pak and Soffer have determined the coadjoint orbits of $U$ for $G = \mathrm{SL}_n(q)$ and $n \le 16$, see \cite{PS}.
The situation for $G$ not of type $\rA$ turns out to be more complicated and we comment
more on this below.

There has been considerable other interest in the character theory and conjugacy
classes of $U$.  We refer the reader to \cite{LM1} or the introduction to \cite{GMR2}
for more information.

Motivation for this work lies in determining generic character tables for $U$, as has been
done for $G$ of type $D_4$ in \cite{GLM}.
This has a view towards applications
to the modular character theory of $G$ in nondefining characteristic; in particular, to determining
decomposition numbers;  see for example
\cite{Hi}, \cite{HH} and \cite{HN} for applications
of the character theory of parabolic subgroups to the modular representation theory
of $G$ in certain low rank cases.

We move on to give an outline of our algorithm to parameterize the irreducible characters of $U$;
we have omitted some details here and a full explanation is given in Section \ref{sec:nonc}.  In order to give this
outline we require some more notation.  We write $\Phi^+$ for the system of positive roots determined by $U$, and
for $\alpha \in \Phi^+$ we denote the corresponding root subgroup by $X_\alpha$.

In the algorithm we consider certain subquotients of $U$, which we refer
to as quattern groups.  A {\em pattern subgroup} of $U$ is
a subgroup that is a product of root subgroups, and a {\em quattern group} is a
quotient of a pattern subgroup by a normal pattern subgroup, we refer to
\S\ref{ss:quattern} for a precise definition.
A quattern group is determined by a subset $\cS$ of $\Phi^+$ and denoted by
$X_\cS$.  Given a subset $\cZ$ of $\{\alpha \in \cS \mid X_\alpha \sub Z(X_\cS)\}$, we
define
$$
\Irr(X_\cS)_\cZ = \{\chi \in \Irr(X_\cS) \mid X_\alpha \not\sub \ker \chi \text{ for all }
\alpha \in \cZ\}.
$$

At each stage of the algorithm, we are considering a pair $(\cS,\cZ)$ as above.
We attempt to apply one of two possible types of reductions
to reduce $(\cS,\cZ)$
to one or two pairs such that the
irreducible characters in $\Irr(X_\cS)_\cZ$ are in bijection
with those irreducible characters corresponding to the pairs we have
obtained in the reduction.

The first reduction is based on the elementary
but powerful character theoretic result \cite[Lemma 2.1]{HLMsr}, which is referred to as the reduction lemma.  In Lemma \ref{lem:small}, we state
and prove a specific
version of this lemma, which is the basis of the reduction.
This lemma shows that under certain conditions (which are straightforward to check) we can replace $(\cS,\cZ)$ with $(\cS',\cZ)$, where
$\cS'$ contains two fewer roots than $\cS$, and we have a
bijection between $\Irr(X_\cS)_\cZ$ and $\Irr(X_{\cS'})_\cZ$.

The second reduction is more elementary and used when it is not possible to apply the first reduction.  For this we choose
a root $\alpha$ such that $\alpha \not\in \cZ$, but $X_\alpha \sub Z(X_\cS)$.  Then
$(\cS,\cZ)$ is replaced with the two pairs $(\cS \setminus \{\alpha\},\cZ)$ and $(\cS,\cZ \cup \{\alpha\})$.  The
justification of this reduction is that $\Irr(X_\cS)_\cZ$ can be partitioned into the characters
in which $X_\alpha$ is contained in the kernel, namely $\Irr(X_{\cS \setminus \{\alpha\}})_\cZ$,
and the characters
in which $X_\alpha$ is not contained in the kernel, namely $\Irr(X_\cS)_{\cZ \cup \{\alpha\}}$.

We first partition the characters in terms of the root subgroups that
lie in their kernel, and then apply the reductions to each part of this partition.
After we have successively applied these reductions as many times as possible, we are
left with a set $\{(\cS_1,\cZ_1),\dots,(\cS_m,\cZ_m)\}$ for some $m \in \Z_{\ge 1}$ such that
$\Irr(U)$ is in bijection with the disjoint union
$$
\bigsqcup_{i=1}^m \Irr(X_{\cS_i})_{\cZ_i}.
$$
We refer to the pairs $(\cS_i,\cZ_i)$ as {\em cores}.  In many cases we have that $X_{\cS_i}$
is abelian in which case it is trivial to determine $\Irr(X_{\cS_i})_{\cZ_i}$.  The more interesting
cases are when $X_{\cS_i}$ is not abelian, we refer to these as {\em nonabelian cores},
where there is still some work required to determine
$\Irr(X_{\cS_i})_{\cZ_i}$.

As proved in Theorem \ref{thm:1go}, the irreducible characters of $U$ corresponding to $\Irr(X_{\cS_i})_{\cZ_i}$
are actually obtained from irreducible characters of $X_{\cS_i}$ by first inflating to a certain pattern
subgroup of $U$ and then inducing to $U$.  In particular, this gives a method to construct the
characters and, therefore, calculate the
values of these characters.

The algorithm has been implemented in the computer algebra system GAP3 \cite{GAP3}
using the CHEVIE package \cite{CHEVIE}.   For $G$ of rank $4$ or less, we have
used this and an analysis of the nonabelian cores obtained to determine a parametrization of $\Irr(U)$.
The results of the calculation are presented in the appendix for $G$ of types $\rB_4$, $\rC_4$ and $\rF_4$.

For the case where $G$ is of type $\rF_4$, we obtain six nonabelian cores.  These families of characters show the
most interesting behavior. For three of these families the parametrization of $\Irr(X_\cS)_\cZ$ is significantly
different when $p > 3$ and $p=3$; correspondingly, we get a different
expression for the size of $\Irr(X_\cS)_\cZ$ as a polynomial in $q$.
For $p = 3$, we obtain irreducible characters of degree $q^4/3$, whereas
for $p > 3$, we obtain that the degree of an irreducible character is always a power of $q$.

As mentioned above for $G$ of type $\rA$, a similar algorithm is given by Evseev in \cite{Ev}, which works in the framework of algebra groups.
This allows the algorithm to work with more general subgroups of $U$; there is not a natural analogue
of algebra groups in general types.  A parametrization of the irreducible characters of $U$ for $G$ or type $\rA$
up to rank $12$ is achieved in \cite{Ev}.  Indeed up to rank 11 there are no nonabelian cores (when working in the
framework of algebra groups).

The lack of an analogue of the more general notion
of algebra groups outside type $\rA$ leading to nonabelian cores in low rank
is in our opinion the main reason why the problem outside of type $\rA$ is more complex.
To deal with $G$ of higher rank, a more systematic procedure for dealing with nonabelian cores
is necessary, and is a direction for future research.  This should be based
on our analysis of nonabelian cores in Section \ref{sec:cors}.

Another direction for future work is
to construct generic character tables of $U$ for $G$ of type $\rF_4$, the case of $\rD_4$ given in \cite{GLM} serves as a model
of how to do this.

\medskip

\noindent
{\bf Acknowledgments:} We would like to thank Gunter Malle for a number of useful suggestions, and Frank Himstedt for some helpful comments. Also we are grateful to Eamonn O'Brien for verifying computationally the number of irreducible characters of $U$ of any fixed degree, when $G$ is of type $\rF_4$, and $q=3^e$ with $e \in \{1, 2, 3\}$.
Part of this research was completed during a visit of the second author to the University of Birmingham; we thank the LMS
for a grant to cover his expenses.

\section{Preliminaries}\label{sec:gens}

\subsection{Background on characters of finite groups.}
Let $G$ be a finite group, and let $H$ be a subgroup of $G$. We denote by $Z(G)$ the centre of $G$, and by $\Irr(G)$ the set of all irreducible
characters of $G$. We write $1_G$ for the trivial
character of $G$.  For a character $\eta \in \Irr(H)$, we write $\eta^G=\Ind_H^G \eta$ for the character of $G$ induced from $\eta$, and we denote
$$
\Irr(G \mid \eta)=\{\chi \in \Irr(G) \mid \langle \chi, \eta^G \rangle \ne 0\}.
$$
For a character $\chi \in \Irr(G)$, we denote
$$
\ker(\chi)=\{ g \in G \mid \chi(g)=\chi(1)\} \quad \text{and} \quad Z(\chi)=\{g \in G \mid |\chi(g)|=\chi(1)\}.
$$
Let $N$ be a normal subgroup of $G$.  We have an inflation map from $\Irr(G/N)$ to $\Irr(G)$ which takes $\chi \in \Irr(G/N)$ to
$\tilde \chi=\Inf_{G/N}^G \chi \in \Irr(G)$, where
$\tilde\chi(g)=\Inf_{G/N}^G \chi(g)=\chi(gN)$ for $g \in G$.
Given $g \in G$, $x \in N$ and $\psi \in \Irr(N)$, we write $x^g$ for $g^{-1}xg$
and we write ${}^g\psi : N \to \C$ for the
character defined by ${}^g\psi(x) = \psi(x^g)$.

For ease of reference later  we recall the following elementary commutativity
property of induction and inflation.  For $\psi \in \Irr(H/N)$ where $N \le H \le G$ and $N \unlhd G$, we have
\begin{equation} \label{eq:swap}
\Inf_{G/N}^G \Ind_{H/N}^{G/N} \psi= \Ind_H^G \Inf_{H/N}^H \psi.
\end{equation}

We next explain an elementary result, which we use in the sequel.
Let $Z$ and $T$ be subgroups of $Z(G)$ such that $Z\cap T=1$.
We can identify $Z$ with a subgroup of $G/T$. Let $\lambda \in \Irr(Z)$ and let
$\tilde \lambda$ denote its inflation to $ZT$.  Then it is straightforward
to show that we have a bijection
$\Irr(G \mid \tilde \lambda) \longleftrightarrow \Irr(G/T \mid \lambda)$.

The next lemma is key for our algorithm, it was proved in \cite[Lemma 2.1]{HLMsr} and
we refer to it as the reduction lemma.   We note that a similar result in the context of
algebra groups was previously proved by Evseev in \cite[Lemma 2.1]{Ev}.

\begin{lemma}[Reduction lemma] \label{lem:rl}
Let $G$ be a finite group, let $H \le G$ and let $X$ be a transversal of $H$ in $G$.
Let $Y$ and $Z$ be subgroups of $H$, and $\lambda \in \Irr(Z)$.  Suppose that
\begin{itemize} \item[(i)] $Z \sub Z(G)$,
\item[(ii)] $Y \trianglelefteq H$,
\item[(iii)] $Z \cap Y = 1$,
\item[(iv)] $ZY \trianglelefteq G$,
\item[(v)] for the inflation $\tilde \lambda \in \Irr(ZY)$ of $\lambda$, we have that
${^x}\tilde \lambda \ne {^y}\tilde \lambda$ for all $x,y \in X$ with $x \ne y$.
\end{itemize}
Then we have a bijection
\begin{align*}
\Irr(H/Y \mid \lambda) &  \to \Irr(G \mid \lambda) \cap \Irr(G \mid 1_Y)\\
\chi & \mapsto \Ind_H^G \Inf_{H/Y}^H \chi.
\end{align*}
Moreover, if $|X|=|Y|$, then $\Irr(G \mid \lambda) \cap \Irr(G \mid 1_Y) = \Irr(G \mid \lambda)$.
\end{lemma}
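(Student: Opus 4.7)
The plan is to deduce the bijection from Clifford's theorem applied to the normal subgroup $ZY \trianglelefteq G$ (condition (iv)) with $\tilde\lambda \in \Irr(ZY)$ as the base character, and then translate via inflation. First I would observe that conditions (i) and (iii) force $ZY$ to be an internal direct product $Z \times Y$, so that $\tilde\lambda = \lambda \boxtimes 1_Y$; since $Z \sub Z(G)$ is abelian, $\lambda$ is linear, and any $\chi \in \Irr(G)$ restricts to $Z$ as $\chi(1)\mu_\chi$ for some $\mu_\chi \in \Irr(Z)$. When $\mu_\chi = \lambda$ one has $\chi|_{ZY} = \lambda \boxtimes (\chi|_Y)$, and so the irreducible constituents of $\chi|_{ZY}$ are exactly the $\lambda \boxtimes \eta$ with $\eta$ a constituent of $\chi|_Y$. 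This gives the identification
\[
\Irr(G \mid \tilde\lambda) = \Irr(G \mid \lambda) \cap \Irr(G \mid 1_Y),
\]
together with its counterpart inside $H$.

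Next I would show that the inertia group $I_G(\tilde\lambda)$ equals $H$. Since $Z$ is central and $Y$ is normal in $H$, every element of $H$ preserves both $\lambda$ on $Z$ and the trivial character on $Y$, so $H \sub I_G(\tilde\lambda)$ and the $G$-orbit of $\tilde\lambda$ has size at most $|G:H| = |X|$. Condition (v) says this orbit has size exactly $|X|$, so $I_G(\tilde\lambda) = H$. Clifford's correspondence then supplies a bijection
\[
\Irr(H \mid \tilde\lambda) \;\longrightarrow\; \Irr(G \mid \tilde\lambda), \qquad \psi \mapsto \Ind_H^G \psi.
\]

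The remaining step is to identify $\Irr(H \mid \tilde\lambda)$ with $\Irr(H/Y \mid \lambda)$ via inflation. Because $Y \trianglelefteq H$ and $1_Y$ is $H$-invariant, Clifford theory on $Y$ shows that $\Irr(H \mid 1_Y)$ coincides with the set of $\chi \in \Irr(H)$ having $Y \sub \ker \chi$, which is precisely the image of $\Inf_{H/Y}^H$. Intersecting with $\Irr(H \mid \lambda)$ and using the identification from the first paragraph gives a bijection $\Irr(H/Y \mid \lambda) \to \Irr(H \mid \tilde\lambda)$, and composing with Clifford induction produces the stated map $\chi \mapsto \Ind_H^G \Inf_{H/Y}^H \chi$.

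For the moreover clause I would compare sums of squared degrees. Since $Z$ is central abelian, for any $\mu \in \Irr(Z)$
\[
\sum_{\chi \in \Irr(G \mid \mu)} \chi(1)^2 = |G|/|Z|,
\]
as this is the dimension of the $\mu$-isotypic component of the regular representation under the $Z$-action. The same identity inside $H/Y$, combined with the fact that the bijection of the theorem scales degrees by $|G:H|=|X|$, yields
\[
\sum_{\chi \in \Irr(G \mid \tilde\lambda)} \chi(1)^2 = |X|^2 \cdot |H|/(|Y||Z|),
\]
which matches $|G|/|Z| = |X|\cdot|H|/|Z|$ exactly when $|X|=|Y|$. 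In that case the inclusion $\Irr(G \mid \tilde\lambda) \sub \Irr(G \mid \lambda)$ is forced to be an equality. The main technical obstacle I anticipate is the clean verification that condition (v) is equivalent to $I_G(\tilde\lambda)=H$; once that is in hand, everything else is Clifford's theorem, inflation, and a degree count.
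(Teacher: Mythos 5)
Your proof is correct: the identification $\Irr(G \mid \tilde\lambda) = \Irr(G \mid \lambda) \cap \Irr(G \mid 1_Y)$ coming from the central decomposition $ZY = Z \times Y$, the verification that condition (v) forces $I_G(\tilde\lambda) = H$, the Clifford correspondence $\Irr(H \mid \tilde\lambda) \to \Irr(G \mid \tilde\lambda)$, the inflation step identifying $\Irr(H \mid \tilde\lambda)$ with $\Irr(H/Y \mid \lambda)$, and the squared-degree count $\sum_{\chi \in \Irr(G \mid \mu)} \chi(1)^2 = |G|/|Z|$ for the moreover clause are all sound. The paper does not reprove this lemma but cites \cite[Lemma 2.1]{HLMsr} for it; your argument is essentially the standard Clifford-theoretic proof of that result.
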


Let $p$ be a prime, and let $q=p^e$ for $e \in \Z_{\ge 1}$. We let $\F_q$ be the finite field with $q$ elements.
Denote by $\Tr : \F_q \to \F_p$ the trace map, and define $\phi : \F_q \to \C^\times$
by $\phi(x) = e^{\frac{i2\pi \Tr(x)}{p}}$, so that $\phi$ is a nontrivial character
from the additive group of $\F_q$ to the multiplicative group $\C^\times$.  We note
that $\ker \phi = \ker \Tr$.
For $a \in \F_q$, we define $\phi_a \in \Irr(\F_q)$ by $\phi_a(t) = \phi(at)$, and note that
$\Irr(\F_q)=\{\phi_a \mid a \in \F_q\}$.

It is clear that $\Tr(a_1s_1 + \dots + a_rs_r)=0$ for all
$s_1, \dots, s_r \in \F_q$ holds if and only if $a_1=\dots=a_r=0$.
Moreover, since the Frobenius automorphism $t \mapsto t^p$ is an automorphism of $\mathbb{F}_q$,
we have that the equality $\Tr(at^p)=0$ holds for all $t \in \mathbb{F}_q$ if and only if $a=0$.

The next lemma is important in our analysis of nonabelian cores; a version of this lemma giving $\ker \phi$ for an arbitrary choice
of character $\phi : \F_q \to \C^\times$, which is less explicit, was proved in \cite[Proposition 1.3]{LM1}.

\begin{lemma}\label{lem:TL} For a fixed $a \in \mathbb{F}_q^{\times}$, let $\mathbb{T}_a=\{t^p-a^{p-1}t \mid t \in \mathbb{F}_q\}$. Then
$$a^{-p} \, \mathbb{T}_a=\ker \Tr.$$
\end{lemma}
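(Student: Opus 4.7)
The plan is to reduce the description of $a^{-p}\mathbb{T}_a$ to the image of the Artin--Schreier map $s\mapsto s^p-s$ on $\mathbb{F}_q$, and then to identify this image with $\ker\Tr$.

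First I would perform the substitution $s = t/a$, which is a bijection of $\mathbb{F}_q$ since $a \in \mathbb{F}_q^\times$. Direct computation gives
\[
a^{-p}\bigl(t^p - a^{p-1}t\bigr) \;=\; \bigl(t/a\bigr)^p - (t/a) \;=\; s^p - s,
\]
so $a^{-p}\mathbb{T}_a = \{s^p - s \mid s \in \mathbb{F}_q\}$. This is the payoff step: after scaling by $a^{-p}$ the parameter $a$ disappears entirely, and we are left with the image of the Artin--Schreier map.

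Next I would show the containment $\{s^p - s \mid s \in \mathbb{F}_q\} \subseteq \ker \Tr$. This follows because the Frobenius $t\mapsto t^p$ fixes $\Tr$ (the trace is the sum of Galois conjugates, and Frobenius just permutes them), so $\Tr(s^p) = \Tr(s)$, hence $\Tr(s^p-s) = 0$ for every $s \in \mathbb{F}_q$.

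Finally I would match cardinalities to upgrade this inclusion to an equality. The map $s\mapsto s^p-s$ is $\mathbb{F}_p$-linear on $\mathbb{F}_q$, and its kernel is the set of roots of $X^p - X$, which is exactly $\mathbb{F}_p$. Thus its image has size $q/p$. On the other hand $\Tr:\mathbb{F}_q\to\mathbb{F}_p$ is surjective (a nondegenerate $\mathbb{F}_p$-linear form), so $|\ker\Tr| = q/p$ as well. Combined with the inclusion, this forces equality, and the lemma follows. I do not expect any real obstacle here: the only substantive observation is the Artin--Schreier reduction via $s=t/a$, after which everything is standard.
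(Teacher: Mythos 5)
Your proposal is correct and follows essentially the same route as the paper: rescale by $a^{-p}$ to identify $a^{-p}\,\mathbb{T}_a$ with the Artin--Schreier image $\{s^p-s \mid s\in\F_q\}$, observe this lies in $\ker\Tr$ since Frobenius fixes the trace, and conclude by comparing cardinalities ($q/p$ on both sides). The only difference is that you spell out why the image and $\ker\Tr$ each have size $q/p$, which the paper leaves implicit.
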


\begin{proof} We have that
\begin{align*}a^{-p} \, \mathbb{T}_a=\{a^{-p}(t^p-a^{p-1}t) \mid t \in \mathbb{F}_q\}=\{\left(ta^{-1} \right)^p-ta^{-1} \mid t \in \mathbb{F}_q\}=\{u^p - u \mid u \in \mathbb{F}_q\}.
\end{align*}
Now, we also have that
\begin{align*}\Tr(t^p-t)=\Tr(t^p)-\Tr(t)=\Tr(t)-\Tr(t)=0.
\end{align*}
Therefore,
$$\{t^p-t \mid t \in \mathbb{F}_q\} \sub \{x \in \mathbb{F}_q \mid \Tr(x)=0\} = \ker \Tr,$$
and all those sets have same cardinality $q/p$, therefore $ \ker(\Tr)=\{t^p-t \mid t \in \mathbb{F}_q\}=a^{-p} \, \mathbb{T}_a$.
\end{proof}

\subsection{Background on reductive groups} \label{ss:reductive}
We introduce now the main notation for finite reductive groups that we require.
We cite \cite[Section 3]{DM}, as a reference
for the theory of algebraic groups over finite fields, and for the
terminology used here.

Let $\bG$ be a connected reductive algebraic group defined and split over $\F_p$.
We assume that $p$ is not a very bad prime for $\bG$; recall that this means
that $p > 2$ if $\bG$ is of type $\rB_r$, $\rC_r$ or $\rF_4$, and $p > 3$ if $\bG$ is of type $\rG_2$.

Fix $\bB$ a Borel subgroup of $\bG$ defined over $\F_p$, and let $\bT$ be a maximal torus of $\bG$ contained in $\bB$
and defined over $\F_p$.
We write $\bU$ for the unipotent radical of $\bB$, which is defined over $\F_p$.
For a subgroup $\bH$ of $\bG$ defined over $\F_p$, we write $H = \bH(q)$
for the group of $\F_q$-rational points of $\bH$.  So $G = \bG(q)$
is a finite Chevalley group and $U = \bU(q)$ is a Sylow $p$-subgroup
of $G$.

For $G$ of type $\rX_r$, we sometimes write $U_{\rX_r}$ instead
of just $U$, so that we can discuss different groups at the same time\def\rS{\mathrm S}.

We denote by $\Phi$ the root system of $\bG$ with respect to $\bT$, and by $\Phi^+$ the set of positive roots in $\Phi$
determined by $\bB$. Let $N=|\Phi^+|$.
Recall that the standard (strict) partial order on $\Phi$ is defined by $\alpha < \beta$ if $\beta - \alpha$ is a sum of positive roots.
We fix an enumeration of $\Phi^+ = \{\alpha_1, \dots, \alpha_N \}$ such that $i < j $ whenever $\alpha_i < \alpha_j$.

For $\alpha \in \Phi^+$, we write $X_\alpha$ for the corresponding root subgroup of $U$
and we choose an isomorphism $x_\alpha : \F_q \to X_\alpha$.
We abbreviate and write $X_i$ for $X_{\alpha_i}$ and $x_i$ for $x_{\alpha_i}$.
Each element of $U$ can be written uniquely as $u=x_1(s_1)x_2(s_2) \cdots x_N(s_N),$ where $s_i \in \F_q$
for all $i=1, \dots, N$. In particular, the $X_i$ generate $U$, and $|U|=q^N$.

We now recall some standard facts about the commutator relations in $U$, we refer the
reader to \cite[Chapters 4 and 5]{Ca} for more details.
Given $\alpha,\beta \in \Phi^+$, we have
$$
[x_\alpha(r),x_\beta(s)] = \prod_{\substack{i, j > 0 : \\ i\alpha+j\beta \in \Phi^+ }} x_{i\alpha+j\beta}(c_{ij}^{\alpha,\beta}r^is^j)
$$
for certain coefficients $c_{ij}^{\alpha,\beta} \in \F_p$.  The parameterizations of the root subgroups can be chosen so that the coefficients
$c_{ij}^{\alpha,\beta}$  are always $\pm 1$, $\pm 2$, $\pm 3$, where $\pm 2$ occurs only for $G$ of type $\rB_r$, $\rC_r$, $\rF_4$ and $\rG_2$, and $\pm 3$
only occurs for $G$ of type $\rG_2$.
Moreover, as $p$ is not very bad for $G$, we have that
$$
[X_{\alpha}, X_{\beta}]=\prod_{\substack{i, j > 0 : \\ i\alpha+j\beta \in \Phi^+ }} X_{i\alpha+j\beta}
$$
for $\alpha, \beta \in \Phi^+$.

\subsection{Quattern groups} \label{ss:quattern}

In our algorithm for determining the irreducible characters of $U$, we require
certain subquotients of $U$, which we refer to as quattern groups.
The term pattern subgroup that we use below goes back to Isaacs, \cite[Section 3]{Icount}, and
quattern groups were also used in \cite{HLMsr}.
We give the required terminology and notation here.  Most of the
assertions made here are well-known, proofs can be found for example in \cite[Sections 3 and 4]{HLMsr}.

A subset $\cP$ of $\Phi^+$ is said to be {\em closed} (or a {\em pattern}) if for $\alpha, \beta \in \cP$,
we have that $\alpha+\beta \in \cP$ whenever $\alpha + \beta \in \Phi^+$.
For a closed subset $\cP$ of $\Phi^+$, we say that $\cK \sub \cP$ is {\em normal in $\cP$}, and write
$\cK \unlhd \cP$, if for all $\delta \in \cK$ and $\alpha \in \cP$,
we have $\delta+\alpha \in \cK$ whenever $\delta + \alpha \in \Phi^+$.
A subset $\cS$ of $\Phi^+$ is called a {\em quattern} if $\cS = \cP \setminus \cK$,
where $\cP$ is closed and $\cK$ is normal in $\cP$.

Let $\cP$ be a closed subset $\Phi^+$, let $\cK$ be normal in $\cP$, and let $\cS = \cP \setminus \cK$.
We define
$$
X_\cP = \prod_{\alpha \in \cP} X_\alpha.
$$
It is a straightforward exercise using the commutator relations to show that
$X_\cP$ is a subgroup of $U$.  We refer to a subgroup of
$U$ of the form $X_\cP$ as a {\em pattern group}.
Further, it is a consequence of the commutator relations
that $X_\cK$ is a normal subgroup of $X_\cP$, and we define
$$
X_\cS = X_{\cP \setminus \cK} = X_\cP/X_\cK.
$$
It follows from the construction of $X_\cS$ that the natural map
$\prod_{\alpha \in \cS} X_\alpha \to X_\cS$ is a bijection.

A subquotient
of $U$ of the form $X_\cS$ is called a {\em quattern} group.
We can easily check that $X_\cS$ is independent up to (canonical) isomorphism
of the possible choice of $\cP$ and $\cK$ such that $\cS = \cP \setminus \cK$, so there is
no ambiguity in the notation $X_\cS$.  We write $\cS = \cP \setminus \cK$ for a quattern,
where we are implicitly assuming that $\cP$ and $\cK$ are such a choice.
Given $\alpha \in \cS$, by a mild abuse of notation
we identify $X_\alpha$ with its image in $X_{\cS}$
for the remainder of this paper.

Let $\cS \sub \Phi^+$ be a quattern and let $X_\cS$ be the corresponding quattern group.
We define
$$
\cZ(\cS) =\{\gamma \in \cS \mid \gamma+\alpha \notin \cS \text{ for all } \alpha \in \cS\}
$$
and
$$
\cD(\cS) = \{\gamma \in \cZ(\cS) \mid \alpha+\beta \ne \gamma \text{ for all } \alpha, \beta \in \cS\}.
$$
Using the commutator relations and the assumption that $p$ is not very bad for $G$, it can be
shown that
$$
Z(X_\cS) = X_{\cZ(\cS)}.
$$
Then it can be seen that
$$
X_\cS \cong X_{\cS \setminus \cD(\cS)} \times X_{\cD(\cS)}.
$$

Let $\cS$ be a quattern and let $\cZ \sub \cZ(\cS)$.  We define
$$
\Irr(X_\cS)_\cZ = \{\chi \in \Irr(X_\cS) \mid X_\alpha \not\sub \ker(\chi) \text{ for all } \alpha \in \cZ\}.
$$
These sets of irreducible characters are key to the algorithm presented
in the next section.

Next we recall that a subset $\Sigma$ of $\Phi^+$ is called an {\em antichain} if for all
$\alpha, \beta \in \Sigma$, we have $\alpha \not< \beta$ and $\beta \not< \alpha$, i.e.\
$\alpha$ and $\beta$ are incomparable in the partial order on $\Phi^+$.

Given an antichain $\Sigma$ in $\Phi^+$, the set
$\cK_\Sigma=\{\beta \in \Phi^+ \mid \beta \nleq \gamma \text{ for all } \gamma \in \Sigma \}$ is a normal subset of $\Phi^+$.  Conversely,
given a normal subset $\cK$ of $\Phi^+$ the set
$\Sigma_\cK$ of maximal elements of $\Phi^+ \setminus \cK$ is clearly an antichain in $\Phi^+$.
This sets up a bijective correspondence between antichains in $\Phi^+$ and normal
subsets in $\Phi^+$.  The assertions made above are standard properties of posets,
see for example \cite[Section 4]{CP}.

For an antichain $\Sigma$ in $\Phi^+$, we define the quattern $\cS_\Sigma = \Phi^+ \setminus \cK_\Sigma$.
Then it is an easy consequence of the definitions that $\cZ(\cS_\Sigma) = \Sigma$.

Now let $\chi \in \Irr(U)$.  We define $\cR(\chi) = \{\alpha \in \Phi^+ \mid X_\alpha \sub \ker \chi\}$.
Using the commutator relations it is easy to see that $\cR(\chi)$
is a normal subset of $\Phi^+$, and thus $\Sigma_{\cR(\chi)}$ is an antichain in $\Phi^+$.
For an antichain $\Sigma \in \Phi^+$, we define $\Irr(U)_\Sigma = \{\chi \in \Irr(U) \mid \Sigma_{\cR(\chi)} = \Sigma\}$.
Then clearly we have the partition
$$
\Irr(U) = \bigsqcup_\Sigma \Irr(U)_\Sigma,
$$
where the union is taken over all antichains $\Sigma$ in $\Phi^+$.  Moreover,
we have that any character in $\Irr(U)_\Sigma$ is the inflation of an
irreducible character in $\Irr(X_{\cS_\Sigma})_\Sigma$.

We frequently want to inflate and induce characters from one quattern group to another, so we fix some
notation for this.  Let $\cS' = \cP' \setminus \cK'$ and $\cS = \cP \setminus \cK$ be quatterns, and let $\psi$
be a character of $X_{\cS'}$.  If $\cP' = \cP$ and $\cK' \supseteq \cK$, then we let $\cL = \cK' \setminus \cK$
and we write $\Inf_\cL \psi = \Inf_{X_{\cS'}}^{X_\cS} \psi$ for the inflation of $\psi$ from $X_{\cS'}$ to $X_\cS$; in case
$\cL = \{\alpha\}$ has one element, we write $\Inf_\alpha \psi = \Inf_\cL \psi$.  If
$\cK' = \cK$ and $\cP' \sub \cP$, then we let $\cT = \cP \setminus \cP'$
and we write $\Ind^{\cT} \psi$ for $\Ind_{X_{\cS'}}^{X_\cS}\psi$; in case $\cT = \{\alpha\}$ has one element,
we write $\Ind^\alpha \psi$ for $\Ind^\cT \psi$.

\subsection{Notation for $\mathrm{F}_4$.} \label{ss:f4}
We fix some specific notation in the case $\bG$ is of type $\rF_4$ that we use later.
In this case the Dynkin diagram of $\Phi$ is given in Figure \ref{fig:f4}, where $\Pi=\{\alpha_1, \alpha_2, \alpha_3, \alpha_4\}$ is
the set of simple roots determined by $\Phi^+$.
\begin{figure}[ht]
\begin{center}
\pgfdeclarelayer{background layer}
\pgfdeclarelayer{foreground layer}
\pgfsetlayers{background layer,main,foreground layer}
\begin{tikzpicture}[place/.style={circle,draw=black,fill=black, tiny},middlearrow/.style={
    decoration={markings,
      mark=at position 0.6 with
      {\draw (0:0mm) -- +(+135:\DynkinArrowLength); \draw (0:0mm) -- +(-135:\DynkinArrowLength);},
    },
    postaction={decorate}
  }, dedge/.style={
    middlearrow,
    double distance=0.5mm,
  }]
  \node(a) at (-3,0) [circle, draw, thick, fill=none, inner sep=2pt,label=below:$\alpha_1$] {};
  \node (b) at (-1,0) [circle, draw, thick, fill=none, inner sep=2pt,label=below:$\alpha_2$] {};
  \node (c) at (1,0) [circle, draw, thick, fill=none, inner sep=2pt,label=below:$\alpha_3$] {};
  \node (d) at (3,0) [circle, draw, thick, fill=none, inner sep=2pt,label=below:$\alpha_4$] {};
  \draw (a) -- (b);
  \draw (c) -- (d) ;
  \path (b) edge[dedge] (c);
\end{tikzpicture}
\caption{The Dynkin diagram of a root system of type $\mathrm{F}_4$.} \label{fig:f4}
\label{tab:F4}
\end{center}
\end{figure}

\noindent
There are $24$ positive roots in $\Phi$, listed in Table \ref{tab:prF4}; they are enumerated as in CHEVIE \cite{CHEVIE}.
We use the notation  \, {\footnotesize $\begin{matrix} 2 & 3 & 4 & 2     \end{matrix}$} \,  for the root $2\alpha_1+3\alpha_2+4\alpha_3+3\alpha_4$,
and similar notation for the other positive roots. The roots are enumerated so that their height is nondecreasing; we recall that the {\em height} of $\sum_{i=1}^4 a_i \alpha_i$ is by definition $\sum_{i=1}^4 a_i$.
We choose parameterizations of the root subgroups in $U$ so that the commutator relations are given as in Table \ref{tab:com}; all $[x_i(s),x_j(r)]$ not
listed in this table are equal to $1$.

\begin{center} \small
\begin{table}[ht]
\begin{tabular}{c|llll}
\hline
Height & Roots & & & \\
\hline
1 & $\alpha_{1}$ & $\alpha_{2}$ & $\alpha_{3}$ & $\alpha_{4}$ \\
\hline
2 & $\alpha_{5}=$ {\footnotesize $\begin{matrix} 1 & 1 & 0 & 0     \end{matrix}$} & $\alpha_{6}=$ {\footnotesize $ \begin{matrix} 0 & 1 & 1 & 0     \end{matrix}   $} & $\alpha_{7}=$ {\footnotesize $ \begin{matrix} 0 & 0 & 1 & 1     \end{matrix}   $} & \\
\hline
3 & $\alpha_{8}=$ {\footnotesize $ \begin{matrix} 1 & 1 & 1 & 0     \end{matrix}   $} & $\alpha_{9}=$ {\footnotesize $ \begin{matrix} 0 & 1 & 2 & 0     \end{matrix}   $} & $\alpha_{10}=$ {\footnotesize $ \begin{matrix} 0 & 1 & 1 & 1     \end{matrix}   $} & \\
\hline
4 & $\alpha_{11}=$ {\footnotesize $ \begin{matrix} 1 & 1 & 2 & 0     \end{matrix}   $} & $\alpha_{12}=$ {\footnotesize $ \begin{matrix} 1 & 1 & 1 & 1     \end{matrix}   $} & $\alpha_{13}=$ {\footnotesize $ \begin{matrix} 0 & 1 & 2 & 1     \end{matrix}   $} & \\
\hline
5 & $\alpha_{14}=$ {\footnotesize $ \begin{matrix} 1 & 2 & 2 & 0     \end{matrix}   $} & $\alpha_{15}=$ {\footnotesize $ \begin{matrix} 1 & 1 & 2 & 1     \end{matrix}   $} & $\alpha_{16}=$ {\footnotesize $ \begin{matrix} 0 & 1 & 2 & 2     \end{matrix} $} \,\,\,\, \,\,\,\, & \\
\hline
6 & $\alpha_{17}=$ {\footnotesize $ \begin{matrix} 1 & 2 & 2 & 1     \end{matrix}   $} & $\alpha_{18}=$ {\footnotesize $ \begin{matrix} 1 & 1 & 2 & 2     \end{matrix}   $} & & \\
\hline
7 & $\alpha_{19}=$ {\footnotesize $ \begin{matrix} 1 & 2 & 3 & 1     \end{matrix}   $} & $\alpha_{20}=$ {\footnotesize $ \begin{matrix} 1 & 2 & 2 & 2     \end{matrix} $} \,\,\,\, \,\,\,\, & & \\
\hline
8 & $\alpha_{21}=$ {\footnotesize $ \begin{matrix} 1 & 2 & 3 & 2     \end{matrix}   $} & & & \\
\hline
9 & $\alpha_{22}=$ {\footnotesize $ \begin{matrix} 1 & 2 & 4 & 2     \end{matrix}   $} & & & \\
\hline
10 & $\alpha_{23}=$ {\footnotesize $ \begin{matrix} 1 & 3 & 4 & 2     \end{matrix}   $} & & & \\
\hline
11 & $\alpha_{24}=$ {\footnotesize $ \begin{matrix} 2 & 3 & 4 & 2     \end{matrix} $} \,\,\,\,  \,\,\,\, & & & \\
\hline
\end{tabular}
\caption{Positive roots in a root system of type $\mathrm{F}_4$.}
\label{tab:prF4}
\end{table}
\end{center}

\begin{center} \small
\begin{table}[ht]
\begin{tabular}{ l l }
$
[x_{ 1 }(s),x_{ 2 }(r) ]=x_{ 5 }(rs)
$&$
[x_{ 1 }(s),x_{ 6 }(r) ]=x_{ 8 }(rs)
x_{ 14 }(-r^2s)
$\\$
[x_{ 1 }(s),x_{ 9 }(r) ]=x_{ 11 }(rs)
$&$
[x_{ 1 }(s),x_{ 10 }(r) ]=x_{ 12 }(rs)
x_{ 20 }(r^2s)
$\\$
[x_{ 1 }(s),x_{ 13 }(r) ]=x_{ 15 }(rs)
x_{ 22 }(r^2s)
$&$
[x_{ 1 }(s),x_{ 16 }(r) ]=x_{ 18 }(rs)
$\\$
[x_{ 1 }(s),x_{ 23 }(r) ]=x_{ 24 }(rs)
$&$

[x_{ 2 }(s),x_{ 3 }(r) ]=x_{ 6 }(rs)
x_{ 9 }(-r^2s)
$\\$
[x_{ 2 }(s),x_{ 7 }(r) ]=x_{ 10 }(rs)
x_{ 16 }(r^2s)
$&$
[x_{ 2 }(s),x_{ 11 }(r) ]=x_{ 14 }(rs)
$\\$
[x_{ 2 }(s),x_{ 15 }(r) ]=x_{ 17 }(rs)
x_{ 24 }(r^2s)
$&$
[x_{ 2 }(s),x_{ 18 }(r) ]=x_{ 20 }(rs)
$\\$
[x_{ 2 }(s),x_{ 22 }(r) ]=x_{ 23 }(rs)
$&$

[x_{ 3 }(s),x_{ 4 }(r) ]=x_{ 7 }(rs)
$\\$
[x_{ 3 }(s),x_{ 5 }(r)]=x_{ 8 }(-rs)
x_{ 11 }(rs^2)
$&$
[x_{ 3 }(s),x_{ 6 }( r )]=x_{ 9 }( 2 rs)
$\\$
[x_{ 3 }(s),x_{ 8 }( r )]=x_{ 11 }( 2 rs)
$&$
[x_{ 3 }(s),x_{ 10 }(r) ]=x_{ 13 }(rs)
$\\$
[x_{ 3 }(s),x_{ 12 }(r) ]=x_{ 15 }(rs)
$&$
[x_{ 3 }(s),x_{ 17 }(r) ]=x_{ 19 }(rs)
$\\$
[x_{ 3 }(s),x_{ 20 }(r) ]=x_{ 21 }(rs)
x_{ 22 }(-rs^2)
$&$
[x_{ 3 }(s),x_{ 21 }( r )]=x_{ 22 }( 2 rs)
$\\$

[x_{ 4 }(s),x_{ 6 }(r)]=x_{ 10 }(-rs)
$&$
[x_{ 4 }(s),x_{ 8 }(r)]=x_{ 12 }(-rs)
$\\$
[x_{ 4 }(s),x_{ 9 }(r)]=x_{ 13 }(-rs)
x_{ 16 }(rs^2)
$&$
[x_{ 4 }(s),x_{ 11 }(r)]=x_{ 15 }(-rs)
x_{ 18 }(rs^2)
$\\$
[x_{ 4 }(s),x_{ 13 }( r )]=x_{ 16 }( 2 rs)
$&$
[x_{ 4 }(s),x_{ 14 }(r)]=x_{ 17 }(-rs)
x_{ 20 }(rs^2)
$\\$
[x_{ 4 }(s),x_{ 15 }( r )]=x_{ 18 }( 2 rs)
$&$
[x_{ 4 }(s),x_{ 17 }( r )]=x_{ 20 }( 2 rs)
$\\$
[x_{ 4 }(s),x_{ 19 }(r) ]=x_{ 21 }(rs)
$&$

[x_{ 5 }(s),x_{ 7 }(r) ]=x_{ 12 }(rs)
x_{ 18 }(r^2s)
$\\$
[x_{ 5 }(s),x_{ 9 }(r)]=x_{ 14 }(-rs)
$&$
[x_{ 5 }(s),x_{ 13 }(r)]=x_{ 17 }(-rs)
x_{ 23 }(-r^2s)
$\\$
[x_{ 5 }(s),x_{ 16 }(r)]=x_{ 20 }(-rs)
$&$
[x_{ 5 }(s),x_{ 22 }(r) ]=x_{ 24 }(rs)
$\\$

[x_{ 6 }(s),x_{ 7 }(r)]=x_{ 13 }(-rs)
$&$
[x_{ 6 }(s),x_{ 8 }( r )]=x_{ 14 }( 2 rs)
$\\$
[x_{ 6 }(s),x_{ 12 }(r) ]=x_{ 17 }(rs)
$&$
[x_{ 6 }(s),x_{ 15 }(r)]=x_{ 19 }(-rs)
$\\$
[x_{ 6 }(s),x_{ 18 }(r)]=x_{ 21 }(-rs)
x_{ 23 }(rs^2)
$&$
[x_{ 6 }(s),x_{ 21 }( r )]=x_{ 23 }( 2 rs)
$\\$

[x_{ 7 }(s),x_{ 8 }(r) ]=x_{ 15 }(rs)
$&$
[x_{ 7 }(s),x_{ 10 }( r )]=x_{ 16 }( -2 rs)
$\\$
[x_{ 7 }(s),x_{ 12 }( r )]=x_{ 18 }( -2 rs)
$&$
[x_{ 7 }(s),x_{ 14 }(r)]=x_{ 19 }(-rs)
x_{ 22 }(rs^2)
$\\$
[x_{ 7 }(s),x_{ 17 }(r) ]=x_{ 21 }(rs)
$&$
[x_{ 7 }(s),x_{ 19 }( r )]=x_{ 22 }( 2 rs)
$\\$

[x_{ 8 }(s),x_{ 10 }(r)]=x_{ 17 }(-rs)
$&$
[x_{ 8 }(s),x_{ 13 }(r) ]=x_{ 19 }(rs)
$\\$
[x_{ 8 }(s),x_{ 16 }(r) ]=x_{ 21 }(rs)
x_{ 24 }(-rs^2)
$&$
[x_{ 8 }(s),x_{ 21 }( r )]=x_{ 24 }( 2 rs)
$\\$

[x_{ 9 }(s),x_{ 12 }(r) ]=x_{ 19 }(rs)
x_{ 24 }(-r^2s)
$&$
[x_{ 9 }(s),x_{ 18 }(r)]=x_{ 22 }(-rs)
$\\$
[x_{ 9 }(s),x_{ 20 }(r)]=x_{ 23 }(-rs)
$&$

[x_{ 10 }(s),x_{ 11 }(r) ]=x_{ 19 }(rs)
x_{ 23 }(-rs^2)
$\\$
[x_{ 10 }(s),x_{ 12 }( r )]=x_{ 20 }( -2 rs)
$&$
[x_{ 10 }(s),x_{ 15 }(r)]=x_{ 21 }(-rs)
$\\$
[x_{ 10 }(s),x_{ 19 }( r )]=x_{ 23 }( 2 rs)
$&$

[x_{ 11 }(s),x_{ 16 }(r) ]=x_{ 22 }(rs)
$\\$
[x_{ 11 }(s),x_{ 20 }(r)]=x_{ 24 }(-rs)
$&$

[x_{ 12 }(s),x_{ 13 }(r) ]=x_{ 21 }(rs)
$\\$
[x_{ 12 }(s),x_{ 19 }( r )]=x_{ 24 }( 2 rs)
$&$

[x_{ 13 }(s),x_{ 15 }( r )]=x_{ 22 }( -2 rs)
$\\$
[x_{ 13 }(s),x_{ 17 }( r )]=x_{ 23 }( -2 rs)
$&$

[x_{ 14 }(s),x_{ 16 }(r) ]=x_{ 23 }(rs)
$\\$
[x_{ 14 }(s),x_{ 18 }(r) ]=x_{ 24 }(rs)
$&$

[x_{ 15 }(s),x_{ 17 }( r )]=x_{ 24 }( -2 rs)
$\\

\end{tabular}
\caption{Commutator relations for $U$ for $G$ of type $\mathrm{F}_4$.}
\label{tab:com}
\end{table}
\end{center}

\section{Algorithm to parameterize the irreducible characters of $U$} \label{sec:nonc}

\subsection{Lemmas required for the algorithm}
Before describing our algorithm to determine the irreducible characters
of $U$, we present a couple of lemmas which are the basis
of the reductions performed in the algorithm.

Our first lemma gives a specific version of Lemma \ref{lem:rl}.

\begin{lemma}\label{lem:small} Let $\cS=\cP \setminus \cK$ be a quattern, let $\cZ \sub \cZ(\cS)$ and let $\gamma \in \cZ$.
Suppose that there exist $\delta, \beta \in \cS \setminus \{\gamma\}$, with $\delta+\beta=\gamma$, such that for all $\alpha, \alpha' \in \cS$
we have $\alpha + \alpha' \ne \beta$,
and that for all $\alpha \in \cS \setminus \{\beta\}$ we have
$\delta+\alpha \not\in \cS$.
Let $\cP'=\cP \setminus \{\beta\}$ and $\cK'=\cK \cup \{\delta \}$.  Then we have that $\cS' = \cP' \setminus \cK'$ is a quattern with $X_{\cS'} \cong X_{\cP'}/X_{\cK'}$, and
we have a bijection
\begin{align*}
\Irr(X_{\cS'})_\cZ &  \to \Irr(X_\cS)_\cZ \\
\chi & \mapsto \Ind^\beta \Inf_\delta \chi
\end{align*}
by inflating over $X_{\delta}$ and inducing to $X_\cS$ over $X_{\beta}$.
\end{lemma}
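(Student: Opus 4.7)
My plan is to reduce this to the reduction lemma (Lemma \ref{lem:rl}). First I verify that $\cS' = \cP' \setminus \cK'$ is a quattern. For $\cP' = \cP \setminus \{\beta\}$ to be closed, I need to rule out $\alpha + \alpha' = \beta$ for $\alpha, \alpha' \in \cP$; splitting $\cP = \cS \sqcup \cK$, the case where both lie in $\cS$ is the given hypothesis, and the cases involving $\cK$ are handled by normality of $\cK$ in $\cP$ (which forces the sum into $\cK$, not $\beta \in \cS$). For $\cK' = \cK \cup \{\delta\}$ to be normal in $\cP'$, I need $\delta + \alpha \in \cK'$ whenever $\alpha \in \cP'$ and $\delta + \alpha \in \Phi^+$: if $\alpha \in \cK$ this follows from normality of $\cK$; if $\alpha \in \cS \setminus \{\beta\}$, the hypothesis $\delta + \alpha \notin \cS$ together with closure of $\cP$ forces $\delta + \alpha \in \cP \setminus \cS = \cK$.

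Next, I fix a nontrivial $\lambda \in \Irr(X_\gamma)$ and apply Lemma \ref{lem:rl} with $G = X_\cS$, $H = X_{\cS \setminus \{\beta\}}$ (the image in $X_\cS$ of the pattern subgroup attached to $\cP \setminus \{\beta\}$), $Z = X_\gamma$, $Y = X_\delta$, and transversal $X = X_\beta$. Condition (i) is the centrality of $X_\gamma$, which holds since $\gamma \in \cZ \sub \cZ(\cS)$. For conditions (ii) and (iv), I use the commutator relations: the hypothesis $\delta + \alpha \notin \cS$ for $\alpha \in \cS \setminus \{\beta\}$ yields $[X_\delta, X_\alpha] = 1$ in $X_\cS$, and $[X_\delta, X_\beta] \sub X_\gamma$. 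For the latter, the only a priori nontrivial contribution besides $X_\gamma$ comes from roots of the form $i\delta + j\beta$ with $i + j \ge 3$; but any such root can be written as $\gamma + (i-1)\delta + (j-1)\beta$, and by centrality of $\gamma$ in $\cS$ this cannot lie in $\cS$. Condition (iii) is immediate since $\gamma \ne \delta$ and the product decomposition of $X_\cS$ along root subgroups is direct.

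The heart of the verification is condition (v). Writing $\tilde\lambda$ for the inflation to $X_\gamma X_\delta$ and computing in $X_\cS$ we get $x_\beta(r)^{-1} x_\delta(t) x_\beta(r) = x_\delta(t) x_\gamma(c\, t r)$ for a nonzero structure constant $c \in \F_p^\times$, and hence
$$
{}^{x_\beta(r)} \tilde\lambda \bigl( x_\gamma(s) x_\delta(t) \bigr) = \lambda \bigl( x_\gamma(s + c t r) \bigr).
$$
Since $\lambda$ is nontrivial on $X_\gamma$, distinct values of $r$ produce distinct characters, which is (v). As $|X_\beta| = |X_\delta| = q$, the final clause of Lemma \ref{lem:rl} yields a bijection $\Irr(H/Y \mid \lambda) \to \Irr(X_\cS \mid \lambda)$ given by $\chi \mapsto \Ind_H^{X_\cS} \Inf_{H/Y}^H \chi$, which under the canonical identification $H/Y \cong X_{\cS'}$ becomes $\chi \mapsto \Ind^\beta \Inf_\delta \chi$.

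To finish, I take the disjoint union over all nontrivial $\lambda \in \Irr(X_\gamma)$ to obtain a bijection between characters in $\Irr(X_{\cS'})$ and $\Irr(X_\cS)$ whose kernel does not contain $X_\gamma$. It remains to check that this bijection restricts to $\Irr(X_{\cS'})_\cZ \to \Irr(X_\cS)_\cZ$. For $\alpha \in \cZ \setminus \{\gamma\}$, $X_\alpha$ is central both in $X_\cS$ and in $X_{\cS'}$, so whether $X_\alpha \sub \ker \chi$ is preserved by $\Inf_\delta$ (as $\alpha \ne \delta$) and by $\Ind^\beta$ (inducing preserves kernel membership for central subgroups), and the desired restriction follows. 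The main obstacle I anticipate is the commutator bookkeeping underlying (ii), (iv), and (v): the trick of writing any higher-order term $i\delta + j\beta$ as $\gamma$ plus something in $\cS$ and invoking centrality of $\gamma$ is what makes all these computations work uniformly.
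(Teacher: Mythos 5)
Your proposal is correct and follows essentially the same route as the paper's proof: first verify that $\cP'$ is closed and $\cK'$ is normal in $\cP'$ using the two hypotheses on $\beta$ and $\delta$, then apply Lemma \ref{lem:rl} with $G = X_\cS$, $H = X_{\cS\setminus\{\beta\}}$, $Z = X_\gamma$, $Y = X_\delta$, $X = X_\beta$, checking condition (v) via the commutator relation $[x_\beta(r), x_\delta(t)]$ and the nontriviality of $\lambda$ on $X_\gamma$. You are in fact somewhat more explicit than the paper about the higher commutator terms $i\delta+j\beta$, the role of the nonvanishing structure constant (which is where the "not very bad prime" hypothesis enters), and the final restriction to the sets $\Irr(\cdot)_\cZ$.
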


\begin{proof}
Let $\alpha, \alpha' \in \cP'$. If $\alpha \in \cK$ or $\alpha' \in \cK$, then it cannot be that $\alpha+\alpha'=\beta$, since in that case we would get $\beta \in \cK$, a contradiction with $\beta \in \cS$. If $\alpha, \alpha' \in \cS'$, by assumption the equality $\alpha+\alpha'=\beta$ cannot hold as well. Since $\cP'=\cS' \cup \cK$, this proves that $\cP'$ is closed.

Let now $\alpha \in \cP'$, and $\alpha' \in \cK'$. If $\alpha' \in \cK$, then $\alpha+\alpha' \in \cK'$ whenever $\alpha + \alpha' \in \Phi^+$, since $\cK \trianglelefteq \cP$. Otherwise, $\alpha'=\delta$, and by assumption $\alpha+\delta \notin \cS$ since $\alpha \neq \beta$, therefore if $\alpha + \delta \in \Phi^+$ then $\alpha+\delta \in \cK'$. Therefore $\cK' \trianglelefteq \cP'$, and $\cS'=\cP'\setminus \cK'$ is a quattern.

It is immediate that conditions (i)--(iv) of Lemma \ref{lem:rl} hold with
$G = X_\cS$, $Z=X_{\gamma}$, $H = X_{\cS \setminus \{\beta\}}$, $X = X_\beta$ and $Y = X_\delta$.

Let
$\lambda \in \Irr(Z)$ and $\tilde \lambda = \Inf_\delta \lambda$.   Then for $s_1, s_2 \in \F_q$, we have
$$
 {}^{x_{\beta}(s_1)} \tilde\lambda={}^{x_{\beta}(s_2)} \tilde \lambda \text{ if and only if } \lambda([x_{\beta}(s_1), x_{\delta}(t)])=\lambda([x_{\beta}(s_2), x_{\delta}(t)]) \text{ for all } t \in \F_q.
$$
Therefore, the commutator formulas in \S\ref{ss:reductive} imply that condition (v) must be satisfied, and of course $|X| = |Y| = q$, so the lemma follows.
\end{proof}

Our second lemma is an immediate consequence of the definitions.  We state
it for ease of reference later, and omit any proof.

\begin{lemma} \label{lem:split}
Let $\cS$ be a quattern and let $\alpha \in \cZ(\cS)$.  Then there is a bijection
$\Irr(X_\cS) \to \Irr(X_\cS)_{\{\alpha\}} \sqcup \Irr(X_{\cS \setminus \{\alpha\}})$.
\end{lemma}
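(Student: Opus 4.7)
The plan is to partition $\Irr(X_\cS)$ according to whether or not $X_\alpha$ lies in the kernel of a given character. By definition, the set of $\chi \in \Irr(X_\cS)$ with $X_\alpha \not\sub \ker\chi$ is precisely $\Irr(X_\cS)_{\{\alpha\}}$, so all the content of the lemma is to identify the complementary set — those $\chi \in \Irr(X_\cS)$ with $X_\alpha \sub \ker\chi$ — with $\Irr(X_{\cS \setminus \{\alpha\}})$ via inflation along a natural quotient map.

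First I would check that $\cS \setminus \{\alpha\}$ is actually a quattern. Writing $\cS = \cP \setminus \cK$, the claim is that $\cK' := \cK \cup \{\alpha\}$ is normal in $\cP$, so that $\cS \setminus \{\alpha\} = \cP \setminus \cK'$. To verify this, take $\gamma \in \cK'$ and $\beta \in \cP$ with $\gamma + \beta \in \Phi^+$. If $\gamma \in \cK$, this reduces to the normality of $\cK$ in $\cP$. Otherwise $\gamma = \alpha$; if $\beta \in \cK$ we again invoke normality of $\cK$, and if $\beta \in \cS$ then $\alpha+\beta \notin \cS$ by $\alpha \in \cZ(\cS)$, while closedness of $\cP$ forces $\alpha+\beta \in \cP$, hence $\alpha+\beta \in \cK \sub \cK'$.

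Next, since $\alpha \in \cZ(\cS)$ we have $X_\alpha \sub Z(X_\cS)$, in particular $X_\alpha \trianglelefteq X_\cS$. The quotient $X_\cS / X_\alpha$ is canonically isomorphic to $X_\cP / X_{\cK'} = X_{\cS \setminus \{\alpha\}}$, so inflation along the projection $X_\cS \twoheadrightarrow X_\cS/X_\alpha$ provides a bijection between $\Irr(X_{\cS \setminus \{\alpha\}})$ and $\{\chi \in \Irr(X_\cS) \mid X_\alpha \sub \ker\chi\}$. Combined with the definitional observation in the first paragraph, this produces the desired bijection $\Irr(X_\cS) \to \Irr(X_\cS)_{\{\alpha\}} \sqcup \Irr(X_{\cS \setminus \{\alpha\}})$.

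There is no real obstacle — the only non-formal step is confirming $\cK \cup \{\alpha\} \trianglelefteq \cP$, which is where the hypothesis $\alpha \in \cZ(\cS)$ is used. This is why the authors flag the result as immediate and omit the proof.
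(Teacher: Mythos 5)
Your proof is correct and is exactly the argument the paper intends: the paper omits the proof as an immediate consequence of the definitions, and your write-up simply supplies the two details it leaves implicit, namely that $\cK \cup \{\alpha\}$ is normal in $\cP$ (so that $\cS \setminus \{\alpha\}$ is a quattern and $X_{\cS \setminus \{\alpha\}} \cong X_\cS/X_\alpha$) and that inflation identifies $\Irr(X_{\cS\setminus\{\alpha\}})$ with the characters of $X_\cS$ containing $X_\alpha$ in their kernel.
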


\subsection{An example of the algorithm}\label{sub:ex}
Before we give a description of our algorithm, we illustrate it in an example.
We consider a case for $G$ of type $\rF_4$ and  use the notation given in \S\ref{ss:f4}.

We want to compute $\Irr(U)_{\Sigma}$, where $\Sigma=\{\alpha_{12}\}$.
We let $\cS = \cS_\Sigma =
\Phi^+ \setminus \cK_{\Sigma}$, so $\cS = \{\alpha_1,\dots,\alpha_8\} \cup \{\alpha_{10},\alpha_{12}\}$.  Also we let $\cZ = \Sigma = \{\alpha_{12}\}$.
So we want to compute $\Irr(X_\cS)_\cZ$.

Let
$$
(\beta_1,\delta_1)=(\alpha_1,\alpha_{10}), \qquad (\beta_2,\delta_2)=(\alpha_4,\alpha_8), \qquad (\beta_3,\delta_3)=(\alpha_5,\alpha_7).
$$
An application of Lemma \ref{lem:small}, for $(\beta,\delta) = (\beta_1,\delta_1)$ gives a bijection
$\Irr(X_{\cS^1})_\cZ \to \Irr(X_{\cS})_\cZ$,
where $\cS^1 = \cS \setminus \{\beta_1,\delta_1\}$.  Two further applications
give bijections $\Irr(X_{\cS^2})_\cZ \to \Irr(X_{\cS^1})_\cZ$ and $\Irr(X_{\cS^3})_\cZ \to \Irr(X_{\cS^2})_\cZ$,
where $\cS^2 = \cS^1 \setminus \{\beta_2,\delta_2\}$ and $\cS^3 = \cS^2 \setminus \{\beta_3,\delta_3\}$.
We record the sets $\cA = \{\beta_1,\beta_2,\beta_3\}$ and $\cL = \{\delta_1,\delta_2,\delta_3\}$
to remind us which reductions were performed. We also define $\cK=\cK_{\Sigma} \cup \cL$. These three reductions are all instances
of TYPE R reductions (the capitalized R means ``reduction lemma") in Algorithm \ref{alg:main} in the next subsection.

Now we can see that $\alpha_{12} \in \cD(\cS^3)$, so that
$X_{\cS^3} \cong X_{\cS^3 \setminus \{\alpha_{12}\}} \times X_{12}$.
In particular, this means there is no possibility to apply
Lemma \ref{lem:small}, with $\gamma \in \cZ = \{\alpha_{12}\}$.

We find that $\cZ(\cS^3) \setminus \cD(\cS^3) = \{\alpha_6\}$.
We can apply Lemma \ref{lem:split} to obtain a bijection
$$
\Irr(X_{\cS^3})_\cZ \to \Irr(X_{\cS^3})_{\cZ \cup\{\alpha_6\}} \sqcup \Irr(X_{\cS^3 \setminus \{\alpha_6\}}).
$$
We now split the two cases and consider them in turn.  We note that this
is an example of a TYPE S reduction (the capitalized S means ``split") as defined in our algorithm in the next subsection.

First we consider
$$
\Irr(X_{\cS^3})_{\cZ^3},
$$
where $\cS^3 = \{\alpha_2, \alpha_3, \alpha_6, \alpha_{12}\}$ and $\cZ^3 = \{\alpha_6,\alpha_{12}\}$.
We can apply Lemma \ref{lem:small} with $\delta=\alpha_3$, $\beta=\alpha_2$, and $\gamma=\alpha_6$. We then get a bijection
$\Irr(X_{\cS^4})_{\cZ^3} \longrightarrow \Irr(X_{\cS^3})_{\cZ^3}$, where $\cS^4 = \cS^3 \setminus \{\alpha_2,\alpha_3\} = \{\alpha_6,\alpha_{12}\}$.
This is another reduction of TYPE R as defined in the next subsection.  We record this reduction
by adjoining $\alpha_2$ to $\cA$ to obtain $\cA' = \{\alpha_1,\alpha_4,\alpha_5, \alpha_2\}$
and adjoining $\alpha_3$ to $\cL$ to obtain $\cL' = \{\alpha_{10},\alpha_8,\alpha_7, \alpha_3\}$. Moreover, we put $\cK'=\cK_{\Sigma} \cup \cL'$.

We note that $X_{\cS^4} = X_6 \times X_{12}$, so we can parameterize
$\Irr(X_{\cS^4})_{\cZ^3}$ as $\{\lambda^{a_6,a_{12}} \mid a_6,a_{12} \in \F_q^\times\}$, where
$\lambda^{a_6,a_{12}}(x_6(t)) = \phi(a_6t)$ and $\lambda^{a_6,a_{12}}(x_{12}(t)) = \phi(a_{12}t)$.
Through the bijections given by Lemma \ref{lem:small}, we obtain characters
of $U$ forming part of $\Irr(U)_\Sigma$ by a process of successive inflation and induction of
the characters $\lambda^{a_6,a_{12}}$.  These characters are
$$
\chi^{a_6,a_{12}} = \Inf_{\cK_\Sigma}   \Ind^{\alpha_1} \Inf_{\alpha_{10}} \Ind^{\alpha_4} \Inf_{\alpha_8}\Ind^{\alpha_5} \Inf_{\alpha_7}\Ind^{\alpha_2} \Inf_{\alpha_3}\lambda^{a_6,a_{12}}.
$$
However, it turns out that these characters can be obtained by a single inflation and then induction,
thanks to Theorem \ref{thm:1go}, and we have
$$
\chi^{a_6,a_{12}} = \Ind^{\cA'} \Inf_{\cK'} \lambda^{a_6,a_{12}}.
$$
The characters $\chi^{a_6,a_{12}}$ have degree $q^4$.

Next we move on to consider the characters in $\Irr(X_{\cS^5})_\cZ$
where $\cS^5 = \cS^3 \setminus \{\alpha_6\} = \{\alpha_2,\alpha_3,\alpha_{12}\}$, and $\cZ = \{\alpha_{12}\}$.
We record that we have put $\alpha_6$ in the kernel by adjoining it to $\cK$ to
obtain $\cK'' = \cK \cup \{\alpha_6\}$.
We see that $X_{\cS^5}$ is abelian, so that
$\Irr(X_{\cS^5}) = \{\lambda_{b_2, b_3}^{a_{12}} \mid a_{12} \in \F_q^\times, b_2,b_3 \in \F_q\}$, where
$\lambda_{b_2, b_3}^{a_{12}}(x_2(t))=\phi(b_{2}t)$ and $\lambda_{b_2, b_3}^{a_{12}}(x_3(t))=\phi(b_{3}t)$,
and $\lambda_{b_2, b_3}^{a_{12}}(x_{12}(t)) = \phi(a_{12}t)$.

Now through the bijections previously obtained in Lemma \ref{lem:small}, we obtain characters $\chi_{b_2,b_3}^{a_{12}}$
of $U$ forming part of $\Irr(U)_\Sigma$ from the characters $\lambda_{b_2,b_3}^{a_{12}}$
by a process of successive inflation and induction.
We have
$$
\chi_{b_2,b_3}^{a_{12}} = \Inf_{\cK_\Sigma}  \Ind^{\alpha_1} \Inf_{\alpha_{10}}\Ind^{\alpha_4} \Inf_{\alpha_8}  \Ind^{\alpha_5} \Inf_{\alpha_7}\Inf_{\alpha_6}\lambda_{b_2,b_3}^{a_{12}},
$$
and note that by using Theorem \ref{thm:1go}, we
can write these characters as
$$
\chi_{b_2,b_3}^{a_{12}} = \Ind^{\cA} \Inf_{\cK''} \lambda^{a_6,a_{12}}.
$$
These characters have degree $q^3$.

Putting this together, we have that
$$
\Irr(U)_{\{\alpha_{12}\}}=\{\chi^{a_6, a_{12}} \mid a_6, a_{12} \in \mathbb{F}_q^{\times}\}
\sqcup \{ \chi_{b_2, b_3}^{a_{12}} \mid  b_2, b_3 \in \mathbb{F}_q, a_{12} \in \mathbb{F}_q^{\times}\}.
$$
Therefore, $\Irr(U)_{\{\alpha_{12}\}}$ consists of:
\begin{itemize}
\item $(q-1)^2$ characters of degree $q^4$; and
\item $q^2(q-1)$ characters of degree $q^3$.
\end{itemize}

\begin{figure}
\begin{center}

\begin{tikzpicture}[scale=0.5, place/.style={circle,draw=black!50,fill=blue!20,thick,
inner sep=0pt,minimum size=6mm}, transition/.style={rectangle,draw=black!50,thick,
inner sep=0pt,minimum size=5mm},td1/.style={rectangle,draw=black,ultra thick, dotted,
inner sep=0pt,minimum size=5mm},td2/.style={rectangle,draw=black,thick, dotted,
inner sep=0pt,minimum size=5mm},td3/.style={rectangle,draw=black,thin, dotted,
inner sep=0pt,minimum size=5mm},  rrt/.style={circle,draw=black!50,thick,
inner sep=0pt,minimum size=5mm}, brt1/.style={rectangle,draw=black!50,ultra thick,
inner sep=0pt,minimum size=5mm}, brt2/.style={rectangle,draw=black!50,thick,
inner sep=0pt,minimum size=5mm}, brt3/.style={rectangle,draw=black!50,thin,
inner sep=0pt,minimum size=5mm},
grt/.style={rectangle,draw=gray!50,thick,
inner sep=0pt,minimum size=5mm},
crc/.style={circle,draw=black!50,thick,
inner sep=0pt,minimum size=5mm}, terminal/.style={
rectangle,minimum size=6mm,rounded corners=3mm,
very thick,draw=black!50, top color=white,bottom color=black!20, font=\ttfamily}]

\node (a) at (-4.5,0) [td2] {\small{$\alpha_1$}};
  \node (b) at (-1.5,0)  {\small{$\alpha_2$}};
  \node (c) at (1.5,0)  {\small{$\alpha_3$}};
  \node (d) at (4.5,0) [td2] {\small{$\alpha_4$}};
  \node (e) at (-3,2.5) [td2] {\small{$\alpha_5$}};
  \node (f) at (0,2.5)  {\small{$\alpha_6$}};
    \node (g) at (3,2.5) [brt2] {\small{$\alpha_7$}};

  \node (h) at (-1.5, 5) [brt2] {\small{$\alpha_8$}};

  \node (j) at (1.5,5) [brt2] {\small{$\alpha_{10}$}};

  \node (l) at (0,7.5) [rrt] {\small{$\alpha_{12}$}};

    \node (m) at (13.5,5.5)  {\small{$\alpha_2$}};

    \node (n) at (16.5,5.5)  {\small{$\alpha_3$}};

    \node (o) at (15, 7.25) {\small{$\xcancel{\alpha_{6}} $}};

      \node (p) at (15,9) [rrt] {\small{$\alpha_{12}$}};

            \node (q) at (15,2) [rrt] {\small{$\alpha_{12}$}};

                \node (r) at (13.5,-1.5) [td2] {\small{$\alpha_2$}};

    \node (s) at (16.5,-1.5) [brt2] {\small{$\alpha_3$}};

    \node (t) at (15, 0.25) [rrt] {\small{$\alpha_{6} $}};

  \draw (a) -- (e) -- (h);
    \draw (b) -- (e)  ;

  \draw (f) -- (h) ;

  \draw (b) -- (f);
  \draw (c) -- (f) -- (j);

  \draw (c) -- (g) -- (j);

  \draw (d) -- (g)  ;

   \draw (h) -- (l)  ;

    \draw (j) -- (l)  ;

        \draw[->] (7.5, 3.75) -- (10.5, 6.50)  ;

            \draw[->] (7.5, 3.75) -- (10.5, 1)  ;

            \draw (r) -- (t)  ;

    \draw (s) -- (t)  ;

\node at (d) {};

\end{tikzpicture}
\caption{A pictorial representation of the calculation of the
characters in $\Irr(U)_{\{\alpha_{12}\}}$ for $G$ of type $\rF_4$.}
\label{fig:a12}

\end{center}
\end{figure}

In Figure \ref{fig:a12}, we illustrate how we have calculated these characters.
The roots in a circle are in $\cZ$; the roots in a straight box are in $\cL$ and the roots in a
dotted box are in $\cA$.

\subsection{The algorithm.}

Our algorithm is used to calculate $\Irr(U)_\Sigma$ for each antichain
$\Sigma$ in $\Phi^+$.  We explain the algorithm below, which is written in a sort of pseudocode;
the comments in {\em italics} aim to make it easier to understand.

\begin{algorithm} \label{alg:main}
$ $

\medskip

\noindent
{\bf INPUT:}
\begin{itemize}
\item $\Phi^+ = \{\alpha_1,\dots,\alpha_N\}$, the set of positive roots of
a root system with a fixed enumeration such that $i \le j$ whenever $\alpha_i \le \alpha_j$.
\item $\Sigma$, an antichain in $\Phi^+$.
\end{itemize}

\medskip

\noindent
{\bf VARIABLES:}
\begin{itemize}
\item $\cS \sub \Phi^+$ is a quattern.
\item $\cZ$ is a subset of $\cZ(\cS)$.
\item $\cA \sub \Phi^+$ keeps a record of the roots $\beta$ used in a TYPE R reduction.
\item $\cL \sub \Phi^+$ keeps a record of the roots $\delta$ used in a TYPE R reduction.
\item $\cK \sub \Phi^+$ keeps a record of the roots indexing root subgroups in the quotient of the associated quattern group.
\item $\fS$ is a stack of tuples of the form $(\cS,\cZ,\cA,\cL,\cK)$ as above to be considered later in the algorithm.
\item $\fO = (\fO_1,\fO_2)$ is the output.
\end{itemize}

\medskip

\noindent
{\bf INITIALIZATION:}
\begin{itemize}
\item $\cK := \cK_\Sigma$.
\item $\cS := \Phi^+ \setminus \cK_{\Sigma}$.
\item $\cZ := \Sigma$.
\item $\cA := \varnothing$.
\item $\cL := \varnothing$.
\item $\fS := \varnothing$.
\item $\fO := (\varnothing,\varnothing)$.
\end{itemize}

\medskip

\noindent
{\em During the algorithm we consider $\Irr(X_\cS)_\cZ$, going
into four subroutines called ``ABELIAN CORE'', ``TYPE R'', ``TYPE S''
and ``NONABELIAN CORE''.  }

\medskip

\noindent
{\bf ABELIAN CORE.}
\begin{algorithmic}
\IF{$\cS = \cZ(\cS)$}
\STATE Adjoin $(\cS,\cZ,\cA,\cL,\cK)$ to $\fO_1$.
\STATE {\em In this case $X_\cS$ is abelian and we can parameterize the characters in $\Irr(X_\cS)_\cZ$.}
\IF{$\fS = \varnothing$}
\STATE {\bf Finish} and {\bf output} $\fO$.
\STATE {\em In this case we have no more characters to consider, so we are done.}
\ELSE
\STATE Remove the tuple at the
top of the stack $\fS$ and replace $(\cS,\cZ,\cA,\cL,\cK)$ with it, and go to ABELIAN CORE.
\ENDIF
\ELSE
\STATE Go to TYPE R.
\ENDIF
\end{algorithmic}

\medskip

\noindent
{\bf TYPE R.}
\begin{algorithmic}
\STATE Look for pairs $(\beta,\delta) =(\alpha_i,\alpha_j)$ that satisfy the
conditions of Lemma \ref{lem:small} for some $\gamma \in \cZ$.
\IF{such a pair $(\alpha_i,\alpha_j)$ exists}
\STATE Choose the pair with $j$ maximal, and update the variables as follows.
\begin{itemize}
\item $\cS := \cS \setminus \{\alpha_i,\alpha_j\}$.
\item $\cA := \cA \cup \{\alpha_i\}$.
\item $\cL := \cL \cup \{\alpha_j\}$.
\item $\cK := \cK \cup \{\alpha_j\}$.
\end{itemize}
\STATE {\em We are replacing $\cS$ with $\cS'$ as in Lemma \ref{lem:small}, and recording this in $\cA$, $\cL$ and $\cK$.}
\STATE Go to ABELIAN CORE.
\ELSE
\STATE Go to  TYPE S.
\ENDIF
\end{algorithmic}

\medskip

\noindent
{\bf TYPE S.}
\begin{algorithmic}
\IF{ $\cZ(\cS) \setminus (\cZ \cup \cD(\cS)) \ne \varnothing$}
\STATE Let $i$ be maximal such that $\alpha_i \in \cZ(\cS) \setminus (\cZ \cup \cD(\cS))$, and update
as follows.
\begin{itemize}
\item $\fS := \fS \cup \{(\cS \setminus \{\alpha_i\},\cZ, \cA, \cL, \cK \cup \{\alpha_i\})\}$.
\item $\cZ := \cZ \cup \{\alpha_i\}$.
\end{itemize}
\STATE {\em Here we are using Lemma \ref{lem:split}. We first add $(\cS \setminus \{\alpha_i\},\cZ, \cA, \cL, \cK \cup \{\alpha_i\})$ to the stack to be considered later, recording that $X_{\alpha_i}$ is in the kernel of these characters by adding $\alpha_i$ to $\cK$. Then we replace $(\cS,\cZ, \cA, \cL, \cK)$ with $(\cS,\cZ \cup \{\alpha_i\}, \cA, \cL, \cK)$
for the current run.}
\STATE Go to ABELIAN CORE.
\ELSE
\STATE Go to NONABELIAN CORE
\ENDIF
\end{algorithmic}

\medskip

\noindent
{\bf NONABELIAN CORE.}
\begin{algorithmic}
\STATE Adjoin $(\cS,\cZ,\cA,\cL,\cK)$ to $\fO_2$.
\STATE {\em We are no longer able to apply reductions of TYPE R or of TYPE S, and
$X_\cS$ is not abelian, so the algorithm gives up, and this case is output
as a nonabelian core as discussed further later.}
\IF{$\fS = \varnothing$}
\STATE {\bf Output} $\fO$ and {\bf finish}.
\STATE {\em In this case we have no more characters to consider, so we are done.}
\ELSE
\STATE Remove the tuple at the
top of the stack $\fS$ and replace $(\cS,\cZ,\cA,\cL,\cK)$ with it, and go to ABELIAN CORE.
\ENDIF
\end{algorithmic}

\end{algorithm}

The letter R in the TYPE R reduction means ``reduction lemma", while the letter S in TYPE S means ``split".
The letters $\cA$ and $\cL$ mean ``arm" and ``leg" respectively; this terminology is used in \cite{HLMsr},
and it is motivated by the fact that each pair $(\beta,\delta)$ gives rise to a so-called ``hook" subgroup.

We move on to discuss how we interpret the output.  We begin by defining
what we mean by a \emph{core}, which is an element of the output
of our algorithm.

\begin{definition}
Let us suppose that Algorithm \ref{alg:main} has run with input $(\Phi^+,\Sigma)$ and given output $\fO$.
\begin{itemize}
\item An element $(\cS,\cZ,\cA,\cL,\cK)$ of $\fO_1$ is called
an {\em abelian core} for $\Irr(U)_\Sigma$.
\item An element $(\cS,\cZ,\cA,\cL,\cK)$ of $\fO_2$ is called
a {\em nonabelian core} for $\Irr(U)_\Sigma$.
\end{itemize}
\end{definition}

We discuss how we can determine the characters in $\Irr(U)_\Sigma$
corresponding to a core $\fC = (\cS,\cZ,\cA,\cL,\cK)$ in $\fO_1 \cup \fO_2$.
In particular, when $\fC \in \fO_1$ is an abelian core, we give a complete
description of the irreducible characters, however for nonabelian cores there is more work
required.  We require some notation for what occurs in the algorithm.

We obtain $\fC$ through a sequence of reductions of TYPE R and
of TYPE S applied in Algorithm \ref{alg:main}; though here we only
need to consider the TYPE S reduction in this sequence if a root $\gamma$ is added
to $\cK$ (rather than to $\cZ$).  So we consider the sequence of reductions
where in each one either:
\begin{itemize}
\item a pair of roots $\beta$ and $\delta$ is taken from
$\cS$ in a TYPE R reduction, and $\beta$ is added to $\cA$ and $\delta$ is added
to $\cL$ and $\cK$; or
\item a root $\gamma$ is taken from $\cS$ and added to $\cK$.
\end{itemize}
We let $\ell = \ell_\fC$ be the number of these reductions, and define the sequence
$
T(\fC) = (t_1,\dots,t_\ell),
$
where $t_i = \rR$ if the $i$th reduction is a TYPE R reduction
and $t_i = \rS$ if the $i$th reduction is a TYPE S reduction.
We let $I(\rR,\fC)$ be the set of $i$ such that $t_i = \rR$
and $I(\rS,\fC)$ be the set of $i$ such that $t_i = \rS$.
For $i \in I(\rR,\fC)$ we write $(\beta_i,\delta_i)$ for the
pair of roots used in the TYPE R reduction, and for
$i \in I(\rS,\fC)$, we write $\gamma_i$ for the root added
to $\cK$ in the TYPE S reduction.  Thus we have $\cA = \{\beta_i \mid i \in I(\rR,\fC)\}$,
$\cL = \{\delta_i \mid i \in I(\rR,\fC)\}$
and $\cK \setminus \cK_\Sigma = \cL \cup \{\gamma_i \mid i \in I(\rS,\fC)\}$.

We also define the subsets $\cP^0,\cP^1,\dots,\cP^\ell$
and $\cK^0,\cK^1,\dots,\cK^\ell$ of $\Phi^+$ recursively by
\begin{itemize}
\item[] $\cP^0 = \Phi^+$ and $\cK^0 = \cK_\Sigma$;
\item[] $\cP^i =
\begin{cases}
\cP^{i-1} \setminus \{\beta_i\} & \text{if $t_i = \rR$} \\
\cP^{i-1} & \text{if $t_i = \rS$}
\end{cases}
$
\item[] $\cK^i =
\begin{cases}
\cK^{i-1} \cup \{\delta_i\} & \text{if $t_i = \rR$} \\
\cK^{i-1} \cup \{\gamma_i\} & \text{if $t_i = \rS$}
\end{cases}
$
\end{itemize}
We have the following lemma about these sets.

\begin{lemma} \label{lem:descent}
For each $i, j = 1,\dots,\ell$ with $i \le j$, we have that  $\cP^j$ is a closed set, and $\cK^i$ is normal in $\cP^j$.
In particular, $\cS^{i, j}=\cP^{j} \setminus \cK^i$ are quatterns.
\end{lemma}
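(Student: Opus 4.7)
My plan is to prove the lemma by induction on $j$, maintaining as hypothesis that $\cP^j$ is closed and, for each $i \le j$, the inclusion $\cK^i \sub \cP^j$ holds with $\cK^i$ normal in $\cP^j$. The base case $j=0$ is immediate, since $\cP^0 = \Phi^+$ is trivially closed and $\cK^0 = \cK_\Sigma$ is normal in $\Phi^+$ by the standard correspondence between antichains and normal subsets recalled in \S\ref{ss:quattern}. The ``in particular'' clause of the lemma then follows immediately from the definition of a quattern.

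For the inductive step I would split on the type of the $j$-th reduction. When $t_j = \rR$, we have $\cP^j = \cP^{j-1} \setminus \{\beta_j\}$ and $\cK^j = \cK^{j-1} \cup \{\delta_j\}$. Because the algorithm performed a valid TYPE R reduction at step $j$, the hypotheses of Lemma \ref{lem:small} hold for the quattern $\cS^{j-1,j-1}$, so the argument in the first two paragraphs of the proof of that lemma yields directly that $\cP^j$ is closed and that $\cK^j$ is normal in $\cP^j$. For $i < j$, the inclusion $\cK^i \sub \cP^j$ is clear from $\cK^i \sub \cK^{j-1} \sub \cP^{j-1}$ together with $\beta_j \notin \cK^{j-1}$ (since $\beta_j \in \cS^{j-1,j-1}$); and normality of $\cK^i$ in $\cP^j$ then follows from the inductive hypothesis combined with $\cP^j \sub \cP^{j-1}$.

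When $t_j = \rS$, we have $\cP^j = \cP^{j-1}$, and closedness together with all claims for $i < j$ are inherited unchanged from the inductive hypothesis. The only new point is to verify that $\cK^j = \cK^{j-1} \cup \{\gamma_j\}$ is normal in $\cP^j$. Here the key input is that $\gamma_j$ was chosen from $\cZ(\cS^{j-1,j-1})$ in the TYPE S reduction, so by definition $\gamma_j + \alpha \notin \cS^{j-1,j-1}$ for every $\alpha \in \cS^{j-1,j-1}$. A short case split on $\alpha \in \cP^{j-1} = \cS^{j-1,j-1} \sqcup \cK^{j-1}$, using closedness of $\cP^{j-1}$ and normality of $\cK^{j-1}$ in $\cP^{j-1}$, then forces $\gamma_j + \alpha \in \cK^{j-1} \sub \cK^j$ whenever $\gamma_j + \alpha \in \Phi^+$.

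I do not foresee a genuine obstacle here: the R step is essentially a repackaging of Lemma \ref{lem:small}, and the S step reduces to a direct manipulation of the definitions of ``closed'' and ``normal''. The most easily overlooked subtlety is the inclusion $\cK^i \sub \cP^j$ for $i \le j$, which is implicit in the phrase ``$\cK^i$ is normal in $\cP^j$''; this rests on the fact that every root absorbed into $\cK$ by the algorithm was drawn from the then-current $\cS$, and is therefore never again available to serve as some later $\beta_\ell$.
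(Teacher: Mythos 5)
Your proof is correct and follows essentially the same route as the paper's: an induction over the sequence of reductions, using the first part of the proof of Lemma \ref{lem:small} for the TYPE R steps and the centrality of $\gamma_j$ in $\cZ(\cS^{j-1,j-1})$ together with closedness of $\cP^{j-1}$ and normality of $\cK^{j-1}$ for the TYPE S steps. The only organizational difference is that the paper reduces normality of $\cK^i$ in $\cP^j$ to normality in $\cP^i$ and then checks each $\eta\in\cK^i$ according to the step at which it was added, whereas you carry the (correctly justified) inclusion $\cK^i\sub\cP^j$ as part of a strengthened inductive hypothesis; the underlying verifications are identical.
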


\begin{proof}
Of course, $\cP^0=\Phi^+$ is closed. Let us assume that $\cP^{i-1}$ is closed. Without loss of generality, let $\cP^i=\cP^{i-1} \setminus \{\beta_i\}$.
For $\alpha, \alpha' \in \cP^i$, it cannot be that $\alpha+\alpha'=\beta_i$ by construction of $\cP^i$. Also, by inductive assumption,
we have that $\alpha + \alpha' \in \cP^{i-1}$ if $\alpha+\alpha'$ is a positive root. This implies $\alpha+\alpha' \in \cP^i$ or
$\alpha+\alpha' \notin \Phi^+$, that is, $\cP^i$ is closed.

To prove that $\cK^i$ is normal in $\cP^j$ for $i \le j$, it is enough to prove that $\cK^i$ is normal in $\cP^i$, since $\cK^i \subseteq \cP^j \subseteq \cP^i$.
Let $\alpha \in \cP^i$ and $\eta \in \cK^i$. Recall that $\eta \in \cK_{\Sigma}$ or $\eta$ is of the form $\gamma_k$ or $\delta_k$ as above for some $k \le i$.
If $\eta \in \cK_\Sigma$, then since $\cK_{\Sigma} \trianglelefteq \Phi^+$ we have that $\alpha+\eta \in \cK_{\Sigma}$ whenever $\alpha+\eta \in \Phi^+$.
If $\eta= \gamma_k$ for some $k \le i$, then $\eta$ is a central root in $\cS^{k-1, k-1} \supseteq \cS^{i, i}$, therefore since $\alpha \in \cP^i$ we have that
$\alpha + \eta \in \cK^{k-1} \subseteq \cK^i$ or $\alpha + \eta \notin \Phi^+$. If $\eta=\delta_k$, then we notice that  $\beta_k \notin \cP^k$, thus $\beta_k \notin \cP^i$, therefore if $\alpha \in \cP^i$ then
$\alpha + \eta \in \cK^{k-1}$ or $\alpha + \eta \notin \Phi^+$. This implies that $\cK^i$ is normal in $\cP^i$.
\end{proof}

Let $\psi \in \Irr(X_\cS)$.
We define characters $\overline \psi_i \in \Irr(X_{\cP^i\setminus \cK^i})$ for
$i=\ell,\ell-1,\dots,1,0$ recursively by the following sequence of inflations and inductions.
\begin{itemize}
\item[] $\overline \psi_{\ell} = \psi$
\item[] $\overline \psi_{i-1} =
\begin{cases}
\Ind^{\beta_i} \Inf_{\delta_i} \overline \psi_i & \text{if $t_i = \rR$} \\
\Inf_{\gamma_i} \overline \psi_i & \text{if $t_i = \rS$}
\end{cases}
$
\end{itemize}
Finally, we let $\overline \psi = \Inf_{\cK_\Sigma} \overline \psi_0 \in \Irr(U)$.

Suppose that $\fC = (\cS,\cZ,\cA,\cL,\cK) \in \fO_1$ is an abelian core.
We let $\cZ = \{\alpha_{i_1},\dots,\alpha_{i_m}\}$ and $\cS \setminus \cZ =
\{\alpha_{j_1},\dots,\alpha_{j_n}\}$.  Then we have
$$
\Irr(X_\cS)_\cZ = \{\lambda_{\ub}^{\ua}
\mid \ua = (a_{i_1}, \dots, a_{i_m}) \in (\F_q^\times)^m, \, \ub = (b_{j_1}, \dots, b_{j_n}) \in \F_q^n\},
$$
where $\lambda_\ub^\ua$ is defined by
$$
\lambda_\ub^\ua(x_{\alpha_{i_k}}(t))=\phi(a_{i_k}t) \quad  \text{ and } \quad  \lambda_\ub^\ua(x_{\alpha_{j_h}}(t))=\phi(b_{j_h}t)
$$
for every $k=1, \dots, m$ and $h=1, \dots, n$. We define $\chi_\ub^\ua = \overline {\lambda_\ub^\ua}$
and
$$
\Irr(U)_\fC = \{ \chi_\ub^\ua \mid \ua = (a_{i_1}, \dots, a_{i_m}) \in (\F_q^\times)^m, \ub = (b_{j_1}, \dots, b_{j_n}) \in \F_q^n\}.
$$
Through the bijections given by Lemmas \ref{lem:small} and \ref{lem:split}, this is precisely
the set of characters in $\Irr(U)_\Sigma$ corresponding to $\mathfrak C$.

We move on to consider a nonabelian core $\fC = (\cS,\cZ,\cA,\cL,\cK) \in \mathfrak O_2$.
In this case $X_\cS$ is not abelian, so we do not immediately have a parametrization of
$\Irr(X_\cS)_\cZ$, and it is necessary for us to determine a parametrization by hand.
We suppose this has been done and we have
$$
\Irr(X_\cS)_\cZ = \{\psi_\uc \mid \uc \in J_\fC\},
$$
where $J_\fC$ is some indexing set.
We define $\chi_\uc = \overline {\psi_\uc}$
and
$$
\Irr(U)_\fC = \{ \chi_\uc \mid \uc \in J_\fC \}.
$$
The aim of the next section is to develop
a method towards a determination of the set $J_\fC$
when $\fC$ is a nonabelian core.

From the comments given within Algorithm \ref{alg:main} and the discussion above, we deduce the
following theorem regarding the validity of our algorithm.

\begin{theorem}
Suppose that Algorithm \ref{alg:main} has run
with input $(\Phi^+,\Sigma)$ and given output $\fO = (\fO_1,\fO_2)$.
Then we have
$$
\Irr(U)_\Sigma = \bigsqcup_{\fC \in \fO_1} \Irr(U)_\fC \sqcup \bigsqcup_{\fC \in \fO_2} \Irr(U)_\fC.
$$
\end{theorem}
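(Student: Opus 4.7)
The plan is to prove the theorem by induction on the steps of Algorithm \ref{alg:main}, maintaining a decomposition invariant throughout the execution. For any intermediate tuple $\fC = (\cS,\cZ,\cA,\cL,\cK)$ encountered during a run, I extend the definitions preceding the theorem to set $\Irr(U)_\fC = \{\overline{\psi} \mid \psi \in \Irr(X_\cS)_\cZ\}$ using the partial sequence of reductions recorded up to that point. Writing $\fS$ for the current stack and $\fC$ for the tuple currently being processed, the invariant I maintain is
\begin{equation*}
\Irr(U)_\Sigma = \bigsqcup_{\fC'' \in \fO_1 \cup \fO_2} \Irr(U)_{\fC''} \;\sqcup\; \Irr(U)_\fC \;\sqcup\; \bigsqcup_{\fC' \in \fS} \Irr(U)_{\fC'}.
\end{equation*}
At initialization $\fO_1 = \fO_2 = \fS = \varnothing$, $\cS = \cS_\Sigma$, $\cZ = \Sigma$, and the reduction sequence is empty, so $\overline{\psi} = \Inf_{\cK_\Sigma}\psi$; the invariant then reduces to the identity $\Irr(U)_\Sigma = \{\Inf_{\cK_\Sigma}\psi \mid \psi \in \Irr(X_{\cS_\Sigma})_\Sigma\}$, which is the content of the antichain partition of $\Irr(U)$ discussed in Section \ref{ss:quattern}.

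Next I would verify that every transition preserves the invariant. A TYPE R reduction is executed exactly when the hypotheses of Lemma \ref{lem:small} are met, yielding a bijection $\Irr(X_{\cS'})_\cZ \to \Irr(X_\cS)_\cZ$, $\chi \mapsto \Ind^\beta \Inf_\delta \chi$. Since the outer chain of inflations and inductions recorded so far is unaltered, composing $\overline{\cdot}$ with this bijection shows that $\Irr(U)_{\fC}$ is unchanged when the current tuple is replaced by its reduction. A TYPE S reduction decomposes $\Irr(X_\cS)_\cZ = \Irr(X_\cS)_{\cZ \cup \{\alpha_i\}} \sqcup \Irr(X_{\cS \setminus \{\alpha_i\}})_\cZ$ by Lemma \ref{lem:split}, the second summand being inflated from $X_{\cS \setminus \{\alpha_i\}}$; pushing the tuple corresponding to the second summand onto $\fS$ and retaining the first as the current tuple preserves the disjoint union after applying $\overline{\cdot}$. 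Finally, an ABELIAN CORE or NONABELIAN CORE step merely transfers the current tuple into $\fO$ and pops the next tuple from $\fS$, which does not alter the right-hand side.

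The main obstacle is bookkeeping: verifying that at every intermediate step the set $\cS$ still indexes a genuine quattern group, so that $\Irr(X_\cS)_\cZ$ is well defined and Lemmas \ref{lem:small} and \ref{lem:split} apply, and that the recursive construction of $\overline{\psi}$ is compatible with the iterated application of these lemmas. The first concern is handled by Lemma \ref{lem:descent}, which certifies that all sets $\cS^{i,j} = \cP^j \setminus \cK^i$ produced along the way are quatterns; the second reduces to the commutativity of induction and inflation expressed in \eqref{eq:swap} combined with the observation that each reduction modifies only the innermost factor of $\overline{\psi}$. Termination follows since each TYPE R step strictly decreases $|\cS|$ by two, each TYPE S step either enlarges $\cZ$ (decreasing $|\cZ(\cS) \setminus (\cZ \cup \cD(\cS))|$ on the current branch) or pushes a strictly smaller $\cS$ onto the stack, and each CORE step pops from $\fS$. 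When the algorithm halts the current slot and $\fS$ are both empty, so the invariant collapses to the desired identity.
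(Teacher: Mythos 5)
Your proof is correct and takes essentially the same route as the paper, which offers no formal argument at all but simply deduces the theorem ``from the comments given within Algorithm \ref{alg:main} and the discussion above''; your loop invariant makes precise exactly the justification the paper leaves implicit, with Lemma \ref{lem:small} validating TYPE R steps, Lemma \ref{lem:split} validating TYPE S steps, and Lemma \ref{lem:descent} guaranteeing that the intermediate quatterns (and hence the sets $\Irr(X_\cS)_\cZ$) are well defined throughout. Nothing further is needed.
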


We note that the definitions of $\chi_\ub^\ua$
and $\chi_\uc$
given above involve a potentially very long sequence of inflations and inductions.
In fact it turns out that we can obtain them by a single inflation followed by a
single induction, which is stated in Theorem \ref{thm:1go} below.

To prove this theorem, we require the following lemma.  In the statement
of the lemma, we use the notation $\cA_i = \{\beta_j \mid j \ge i\}$,
$\cL_i = \{\delta_j \mid j \ge i\}$
and $\cK_i = \{ \gamma_j \mid j \ge i\}$.

\begin{lemma} \label{lem:1go}
Let $\psi \in \Irr(X_\cS)$, and for $i = 0,1,\dots,\ell$ define $\psi_i$ as above.
Then we have
$$
\psi_i = \Ind^{\cA_i} \Inf_{\cL_i \cup \cK_i} \psi.
$$
\end{lemma}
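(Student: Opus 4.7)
The plan is to proceed by downward induction on $i$. The base case occurs at the largest value of $i$, where the index sets $\cA_i$, $\cL_i$, $\cK_i$ are empty, so the right-hand side collapses to $\psi = \overline\psi_\ell$. For the inductive step, I substitute the inductive hypothesis into the recursive definition of $\overline\psi_{i-1}$ and reorganise using equation (\ref{eq:swap}) together with transitivity of induction and inflation.

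\medskip

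In Case $t_i = \rR$, the recursion gives
\begin{equation*}
\overline\psi_{i-1} = \Ind^{\beta_i}\Inf_{\delta_i}\overline\psi_i = \Ind^{\beta_i}\Inf_{\delta_i}\Ind^{\cA_i}\Inf_{\cL_i\cup\cK_i}\psi.
\end{equation*}
The central manoeuvre is to apply equation (\ref{eq:swap}) to interchange $\Inf_{\delta_i}$ with $\Ind^{\cA_i}$; after this swap, transitivity of induction merges $\Ind^{\beta_i}$ with $\Ind^{\cA_i}$ into $\Ind^{\cA_{i-1}}$, and transitivity of inflation merges $\Inf_{\delta_i}$ with $\Inf_{\cL_i\cup\cK_i}$ into $\Inf_{\cL_{i-1}\cup\cK_{i-1}}$, yielding the desired form. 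Case $t_i = \rS$ is analogous but simpler: only a single additional inflation $\Inf_{\gamma_i}$ must be absorbed, again via (\ref{eq:swap}) followed by transitivity of inflation, with the induction $\Ind^{\cA_i}$ unchanged (since $\cA_{i-1} = \cA_i$ in this case).

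\medskip

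The hard part is verifying that the hypotheses of equation (\ref{eq:swap}) are satisfied at each swap. To pull $\Inf_{\delta_i}$ past $\Ind^{\cA_i}$, I need three nested groups $N \le H \le G$ with $N \unlhd G$, where $N = X_{\delta_i}$, $H$ is the source quattern group from which the induction $\Ind^{\cA_i}$ originates, and $G$ is the common target. Concretely, these should be $G = X_{\cP^i \setminus \cK^{i-1}}$, $H = X_{\cP^\ell \setminus \cK^{i-1}}$, and $N = X_{\delta_i}$. That each of these is a well-defined quattern group, and that $X_{\delta_i} \unlhd X_{\cP^i \setminus \cK^{i-1}}$, follows from Lemma \ref{lem:descent}, since $\cK^i = \cK^{i-1} \cup \{\delta_i\}$ is normal in $\cP^i$ and $\cK^{i-1}$ is normal in both $\cP^i$ and $\cP^\ell$. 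The only remaining point is the containment $\delta_i \in \cP^\ell \setminus \cK^{i-1}$, needed so that $X_{\delta_i}$ embeds into $H$; this reduces to the easy inductive observation that $\cK^j \sub \cP^j$ for all $j$, which holds because at each step the root added to $\cK$ is drawn from the current $\cS = \cP \setminus \cK$, and any $\beta_k$ subsequently removed from $\cP$ is likewise chosen from $\cS$ and hence distinct from the elements of $\cK$. The analogous verification with $\gamma_i$ in place of $\delta_i$ (and $\cP^i = \cP^{i-1}$) handles Case $t_i = \rS$.
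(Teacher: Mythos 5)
Your proof is correct and follows essentially the same route as the paper: reverse induction on $i$, with the inductive step reduced to commuting $\Inf_{\delta_i}$ (or $\Inf_{\gamma_i}$) past $\Ind^{\cA_{i+1}}$ via equation \eqref{eq:swap}, the hypotheses of which are supplied by Lemma \ref{lem:descent}. The only difference is that you spell out the identification of the groups $N \le H \le G$ and the containment $\delta_i \in \cP^\ell \setminus \cK^{i-1}$, which the paper leaves implicit.
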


\begin{proof}
We prove this by reverse induction on $i$, the case $i = \ell$ being trivial.

The inductive step boils down to showing that
$$
\Inf_{\delta_i} \Ind^{\cA_{i+1}} =  \Ind^{\cA_{i+1}} \Inf_{\delta_i}
$$
if $t_i = \rR$ and showing that
$$
\Inf_{\gamma_i} \Ind^{\cA_{i+1}} =  \Ind^{\cA_{i+1}} \Inf_{\gamma_i}
$$
if $t_i = \rS$.
Thanks to Lemma \ref{lem:descent}, we are able to apply \eqref{eq:swap} to deduce both of these
equalities.
\end{proof}

\begin{theorem} \label{thm:1go}  $ $
\begin{itemize}
\item[(a)] Let $\fC \in (\cS,\cZ,\cA,\cL,\cK) \in \fO_1$ be an abelian core,
and let $\chi_\ub^\ua \in \Irr(U)_\fC$ be defined as above.
Then
$$
\chi_\ub^\ua = \Ind^\cA \Inf_{\cK} \lambda_\ub^\ua.
$$
In particular, $\chi_\ub^\ua$ is induced from a linear character of $X_{\cS \cup \cK}$.
\item[(b)]  Let $\fC \in (\cS,\cZ,\cA,\cL,\cK) \in \fO_2$ be a nonabelian core,
and let $\chi_\uc \in \Irr(U)_\fC$ be defined as above.
Then
$$
\chi_\uc = \Ind^\cA \Inf_{\cK} \psi_\uc.
$$
\end{itemize}
\end{theorem}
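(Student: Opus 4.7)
The plan is to reduce the theorem to Lemma~\ref{lem:1go} together with a single application of \eqref{eq:swap} to absorb the outer inflation over $\cK_\Sigma$.

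First I would apply Lemma~\ref{lem:1go} at $i=0$. By the notation preceding that lemma, $\cA_0 = \cA$, $\cL_0 = \cL$, and $\cK_0 = \{\gamma_j \mid j \in I(\rS,\fC)\}$, so the lemma yields
\[
\overline \psi_0 = \Ind^{\cA} \Inf_{\cL \cup \cK_0} \psi.
\]
By definition $\overline \psi = \Inf_{\cK_\Sigma} \overline \psi_0$, so it suffices to commute the outer inflation $\Inf_{\cK_\Sigma}$ past the induction $\Ind^{\cA}$. For this I would invoke \eqref{eq:swap} with $G = U$, $N = X_{\cK_\Sigma}$, and $H = X_{(\cS \cup \cL \cup \cK_0) \cup \cK_\Sigma}$; the hypotheses hold since $\cK_\Sigma$ is a normal subset of $\Phi^+$, so $X_{\cK_\Sigma}$ is normal in $U$ and contained in $H$. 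After swapping, the two consecutive inflations combine into a single inflation $\Inf_{\cK_\Sigma \cup \cL \cup \cK_0} = \Inf_{\cK}$, using that from the bookkeeping of Algorithm~\ref{alg:main} one has $\cK = \cK_\Sigma \cup \cL \cup \{\gamma_j \mid j \in I(\rS,\fC)\}$. Thus
\[
\overline \psi = \Ind^{\cA} \Inf_{\cK} \psi.
\]

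Part~(a) then follows by specializing to $\psi = \lambda_\ub^\ua$: since $X_\cS$ is abelian in an abelian core, $\lambda_\ub^\ua$ is a linear character, and inflation preserves linearity, so $\chi_\ub^\ua = \Ind^{\cA} \Inf_{\cK} \lambda_\ub^\ua$ is induced from the linear character $\Inf_{\cK} \lambda_\ub^\ua$ of $X_{\cS \cup \cK}$. Part~(b) follows analogously by specializing to $\psi = \psi_\uc$, giving $\chi_\uc = \Ind^{\cA} \Inf_{\cK} \psi_\uc$.

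The main obstacle I anticipate is purely notational: correctly identifying the ambient quattern groups at each stage so that the hypotheses of \eqref{eq:swap} are visibly met. Once Lemma~\ref{lem:descent} is in hand---so that all the intermediate $\cS^{i,j}$ are genuine quatterns---the proof is just one additional inflation--induction swap layered on top of Lemma~\ref{lem:1go}, with no further structural input required.
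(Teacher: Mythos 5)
Your proposal is correct and follows essentially the same route as the paper: apply Lemma \ref{lem:1go} at $i=0$ to get $\overline{\psi}_0 = \Ind^{\cA}\Inf_{\cK\setminus\cK_\Sigma}\psi$, then use \eqref{eq:swap} (justified by normality of $X_{\cK_\Sigma}$ in $U$) to commute $\Inf_{\cK_\Sigma}$ past $\Ind^{\cA}$ and merge the two inflations into $\Inf_{\cK}$. The only difference is cosmetic: you spell out the bookkeeping identity $\cK = \cK_\Sigma \cup \cL \cup \{\gamma_j \mid j \in I(\rS,\fC)\}$ and the ambient groups in \eqref{eq:swap}, which the paper leaves implicit.
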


\begin{proof}
We only prove (a) as the proof of (b) is entirely similar.

By Lemma \ref{lem:1go}, we have that $(\chi_\ub^\ua)_0 = \Ind^\cA \Inf_{(\cK \setminus \cK_\Sigma)} \lambda_\ub^\ua$.
Thus
$$
\chi_\ub^\ua = \Inf_{\cK_\Sigma} \Ind^\cA \Inf_{(\cK \setminus \cK_\Sigma)} \lambda_\ub^\ua.
$$
Now we can apply
\eqref{eq:swap} to see that $\Inf_{\cK_\Sigma} \Ind^\cA  = \Ind^\cA \Inf_{\cK_\Sigma} $ from which we can deduce
the theorem.
\end{proof}

\begin{remark}
The choice of total order on $\Phi^+=\{\alpha_1, \dots, \alpha_N\}$ has a significant effect on how the algorithm
runs, as this is used to determine which reductions to make when there
may be a choice.  The resulting parametrization of $\Irr(U)_\Sigma$
consequently depends on this choice of enumeration.
\end{remark}

\begin{remark}
We make a slight abuse in the notation $\chi_{\ub}^{\ua}$. In fact, each $a_i$ and $b_j$ is supposed to record not just a
value in $\mathbb{F}_q^{\times}$ and $\mathbb{F}_q$ respectively, but also $i$ and $j$, so that $\chi_{\ub}^{\ua}$
should strictly read $\chi_{((j_1, b_{j_1}), \dots, (j_n, b_{j_n}))}^{((i_1, a_{i_1}), \dots, (i_m, a_{i_m}))}$, for corresponding choices of
$i_1, \dots, i_m$ and $j_1, \dots, j_n$ indexing positive roots.
\end{remark}

\subsection{Results of algorithm}

We have implemented Algorithm \ref{alg:main} in the algebra system GAP3 \cite{GAP3}, using the CHEVIE package \cite{CHEVIE}.
The algorithm requires us to just work
with $\Phi^+$ and the GAP commands for root systems allow us to do this. We use the enumeration of $\Phi^+$ as given in GAP.

We have run the GAP program for $G$ of rank less than or equal to 7.  For ranks less than or equal to 4 we are able
to deduce a complete parametrization, as the number of nonabelian cores is low.  More
specifically, for $G$ of rank $3$ or less, or $G$ of type $\rC_4$, there are no nonabelian cores, whilst for
 the types $\rB_4$ and $\rD_4$ there is one nonabelian core each, and in type $\rF_4$ we find six nonabelian cores.
The nonabelian core for type $\rD_4$ has already been encountered in
\cite{HLMd4}, and the core for type $\rB_4$ has the same representation theory for $p \ne 2$
as the one in type $D_4$, so these cores have been analysed.
The nonabelian cores for type $\rF_4$ are analysed in \S\ref{ss:f4cors}, and
the corresponding irreducible characters are determined.
The resulting parameterizations of irreducible characters of
$U_{\rB_4}$, $U_{\rC_4}$ and $U_{\rF_4}$ are tabulated in the appendix.
The parametrization for $U_{\rD_4}$ is contained in
\cite{HLMd4}.  Also we note that the parametrization of irreducible characters for $U_{\rB_3}$  can
be read off from that for $U_{\rB_4}$, as $U_{\rB_3}$ is a quotient of $U_{\rB_4}$.  Similarly,
the parametrization of irreducible characters of
$U_{\rC_3}$ can be read off from that for $U_{\rC_4}$.

From the parameterizations we can determine the number of irreducible
characters of $U$ of a given fixed degree.
In particular, we observe that if $G$ is of rank at most $4$ and $p$ is good,
then all irreducible characters of $U$ are of degree $q^d$ for some $d \in \Z_{\ge 0}$.
Moreover, the numbers of irreducible characters of $U$ of degree $q^d$,
is given by a polynomial in $q$ and these polynomials are the same
as the ones given in \cite[Table 3]{GMR2}, where they were only known to be valid for
$p \ge h$ (the Coxeter number of $G$).
Further we also obtain expressions as polynomials in $q$
for the number of characters of a given degree for type $\rF_4$,
and $p =3$; these are given in Table \ref{tab:nof4}. The case of type
$\rD_4$ and $p=2$ is covered in \cite{HLMd4}.

For $G$ or rank greater than 4, the number of nonabelian cores grows, so it is necessary
to develop an approach to deal with these in a systematic way. This should be based on
our analysis of nonabelian cores in the next section and is a
topic for future research. In Table \ref{tab:resultscores}, we present
the output from the algorithm including the number of nonabelian cores.

\begin{table}[t]
\renewcommand{\arraystretch}{1.4}
\begin{tabular}{|l|l|}
\hline
$d$ & $k(U_{\rF_4}(q), d)$  \\
\hline
$1$ & $v^4 +4v^3 +6v^2 +4v+1$   \\
\hline
$q$ & $v^5 +6v^4 +13v^3 +12v^2 +4v$   \\
\hline
$q^2$ & $v^6 +7v^5 +20v^4 +28v^3 +18v^2 +4v$   \\
\hline
$q^3$ & $4v^5 +20v^4 +33v^3 +21v^2 +4v$   \\
\hline
$q^4/3$ & $0$, \text{ if }$p \ge 5$ \\
 & $9v^4/2$, \text{ if }$p=3$ \\
\hline
$q^4$ & $v^8 +8v^7 +28v^6 +58v^5 +79v^4 +66v^3 +24v^2 +2v$, \text{ if }$p \ge 5$ \\
& $v^8 +8v^7 +28v^6 +59v^5 +161v^4/2 +67v^3 +24v^2 +2v$, \text{ if }$p=3$ \\
\hline
$q^5$ & $v^7 +7v^6 +22v^5 +39v^4 +37v^3 +15v^2+2v$, \text{ if }$p \ge 5$ \\
&  $v^7 +7v^6 +23v^5 +41v^4 +37v^3 +15v^2+2v$, \text{ if }$p=3$ \\
\hline
$q^6$ & $2v^6 +14v^5 +36v^4 +40v^3 +17v^2 +2v$, \text{ if }$p \ge 5$  \\
&  $2v^6 +14v^5 +36v^4 +39v^3 +17v^2 +2v$, \text{ if }$p=3$ \\
\hline
$q^7$ & $2v^6 +13v^5 +32v^4 +34v^3 +13v^2 +2v $   \\
\hline
$q^8$ & $4v^5+15v^4+19v^3+8v^2$   \\
\hline
$q^9$ & $v^5 +7v^4 +11v^3 +5v^2$  \\
\hline
$q^{10}$ & $v^4+3v^3+v^2$   \\
\hline
\hline
\multicolumn{2}{|c|}{$k(U_{\rF_4}(q))=\begin{cases} v^8 +9v^7 +40v^6 +124v^5 +256v^4 +288v^3 +140v^2 +24v+1, & \text{ if } p \ge 5 \\
v^8 +9v^7 +40v^6 +126v^5 +264v^4 +288v^3 +140v^2 +24v+1, & \text{ if } p = 3
\end{cases}
$} \\

\hline
\end{tabular}
\medskip
\caption{Numbers of irreducible characters of $U_{\rF_4}$ of fixed degree, for $v=q-1$ and $p \ne 2$.}
\label{tab:nof4}
\end{table}

\begin{table}[t]
\begin{tabular}{|l|l|l|l|l|}
\hline
Type & Antichains & Abelian cores & Nonabelian cores & Running time \\
\hline
\hline
$\rB_4$ & 70 & 80 & 1 (1.23\%) & $T \ll 1$ sec \\
\hline
$\rC_4$ & 70 & 90 & 0 (0\%) & $T \ll 1$ sec \\
\hline
$\rD_4$ & 50 & 52 & 1 (1.88\%) & $T \ll 1$ sec \\
\hline
$\rF_4$ & 105 & 177 & 6 (3.28\%) & $T\sim 1$ sec \\
\hline
$\rB_5$ & 252 & 358 & 10 (2.72\%) & $T\sim 3$ sec \\
\hline
$\rC_5$ & 252 & 417 & 1 (0.24\%) & $T\sim 3$ sec \\
\hline
$\rD_5$ & 182 & 214 & 7 (3.17\%) & $T\sim 1$ sec \\
\hline
$\rB_6$ & 924 & 1842 & 95 (4.90\%) & $T\sim 30$ sec \\
\hline
$\rC_6$ & 924 & 2254 & 22 (0.97\%) & $T\sim 30$ sec \\
\hline
$\rD_6$ & 672 & 991 & 55 (5.26\%) & $T\sim 10$ sec \\
\hline
$\rE_6$ & 833 & 1656 & 156 (8.61\%) & $T\sim 30$ sec \\
\hline
$\rB_7$ & 3432 & 11240 & 969 (7.94\%) & $T\sim 7$ min \\
\hline
$\rC_7$ & 3432 & 14216 & 294 (2.03\%) & $T\sim 7$ min \\
\hline
$\rD_7$ & 2508 & 5479 & 531 (8.84\%) & $T\sim 2.5$ min \\
\hline
$\rE_7$ & 4160 & 33594 & 7798 (18.84\%) & $T\sim 10$ min \\
\hline

\end{tabular}
\medskip
\caption{Results of the algorithm applied in types $\rB_i, \rC_i$ and $\rD_i$, $i=4, 5, 6, 7$ and $\rF_4$, $\rE_k$, $k=6, 7$.}
\label{tab:resultscores}
\end{table}

\section{Nonabelian cores}\label{sec:cors}

In this section we explain the methods we employ to analyse nonabelian cores.  It is helpful
for us first to deal with certain 3-dimensional groups that arise in our analysis.  Then
we outline our general method to deal with nonabelian cores, before explaining how this is applied to
the nonabelian cores in types $\rB_4$ and $\rF_4$.

\subsection{Some 3-dimensional groups} \label{sub:3dim}

Let $f : \F_q \times \F_q \to \F_q$ be an $\F_p$-bilinear map, which we assume
to be surjective.  We define the
group $V = V_f$ to be generated by subgroups
$X_1 = \{x_1(t) \mid t \in \F_q\} \cong \F_q$, $X_2 = \{x_2(t) \mid t \in \F_q\} \cong \F_q$ and
$Z = \{z(t) \mid t \in \F_q\} \cong \F_q$, subject to $Z \sub Z(V)$ and $[x_1(s),x_2(t)] = z(f(s,t))$.
In particular, throughout this subsection, $X_1$ will not denote the root subgroup $X_{\alpha_1}$, similarly for $X_2$.
It is straightforward to see that $V$ is a nilpotent group and that $V = X_1X_2Z$.
Moreover, our assumption that $f$ is surjective implies that
the derived subgroup of $V$ is $Z$.  We note that $V$ is not necessarily a special
group as its center can be strictly larger than its derived subgroup.

We quickly explain how to construct the irreducible characters of $V$.

First we note that the linear characters are given by the characters of $V/Z \cong X_1 \times X_2$.
For $b_1,b_2 \in \F_q$, we define $\chi_{b_1,b_2} \in \Irr(V)$ by
$\chi_{b_1,b_2}(x_1(s_1)x_2(s_2)z(t)) = \phi(b_1s_1+b_2s_2)$, so that
the linear characters of $V$ are $\{\chi_{b_1,b_2} \mid b_1,b_2 \in \F_q\}$.

For each $a \in \F_q^\times$, we define the linear character
$\lambda^a \in \Irr(Z)$ by $\lambda^a(z(t)) = \phi(at)$.
We analyse the characters in $\Irr(V \mid \lambda^a)$ using Lemma \ref{lem:rl}.
We define $X_1' = \{x_1(s) \in X_1 \mid \Tr(af(s,t)) = 0 \text{ for all } t \in \F_q\}$
and define $X_2'$ similarly.
Note that $X_1'$ and $X_2'$ may depend on $a$.
Since the map $\F_q \times \F_q \to \F_p$ given by $(s,t) \mapsto \Tr(f(s,t))$
is $\F_p$-bilinear, we deduce that $X_1'$ and $X_2'$ are
$\F_p$-subspaces of $X_1 \cong \F_q$ and $X_2 \cong \F_q$ respectively, and
we have $|X_1'| = |X_2'|$.  Thus we can choose complements $\tilde X_1$ and $\tilde X_2$
of $X_1'$ and $X_2'$ in $X_1$ and $X_2$ respectively.
Now by Lemma \ref{lem:rl} (with $X = \tilde X_1$ and $Y = \tilde Y_1$),
we see that $\psi \mapsto \Ind_{X_1'X_2Z}^{V} \Inf_{X_1'X_2Z/(\tilde{X}_2 \ker \lambda^a)}^{X_1'X_2Z} \psi$
gives a bijection from $\Irr(X_1'X_2Z/(\tilde{X}_2 \ker \lambda^a) \mid \lambda^a)$ to $\Irr(V \mid \lambda^a)$.
Finally, we observe that $X_1'X_2Z/(\tilde{X}_2 \ker \lambda^a)$ is abelian, so
$\Irr(X_1'X_2Z/(\tilde{X}_2 \ker \lambda^a) \mid \lambda^a)$ is in bijection with the set $\Irr(X_1' \times X_2')$, which
is easily described.

Before considering some particular choices of $f$, we note that it is
straightforward to show that $V_f \cong V_{af}$ for $a \in \F_q^\times$ by
reparamaterizing $Z$ appropriately.

For $f(s,t) = st$, we see that $V_f$ is isomorphic
to $U_{A_2}$.  Clearly we get $X' = Y' = 1$
and then
$$
\Irr(V) =  \{\chi_{b_1,b_2} \mid b_1,b_2 \in \F_q\} \cup \{\chi^a \mid a \in \F_q^\times\},
$$
where $\chi^a = \Ind_{X_2Z}^V \Inf_Z^{X_2Z} \lambda^a$.  Similarly for $f(s,t) = s^pt$
or $f(s,t) = (s^p-ds)t$ where $d \in \F_q^\times$  is not a $(p-1)$th power,
we see that $X' = Y' = 1$, and $\Irr(V)$ is given as above.

The case of major interest to us here is $f(s,t) = (s^p-ds)t$ where $d \in \F_q^\times$ is a $(p-1)$-th power, say $d = e^{p-1}$.
Then we find that $X_1' = \{x_1(s) \mid s^p-ds=0\} = \{x_1(s) \mid s \in e\F_p\}$ and $X_2' = \{x_2(t) \mid \Tr(at\T_e) = 0\} =
\{x_2(t) \mid t \in (e^{-p}/a) \F_p\}$ using Lemma \ref{lem:TL}.  Now for $c_1,c_2 \in \F_p$ we define the
characters $\lambda^a_{c_1,c_2} \in \Irr(X_1'X_2Z/(\tilde X_2 \ker \lambda^a))$ by
$$
\lambda^a_{c_1,c_2}(x_1(es_1)x_2((e^{-p}/a)s_2)z(t)) =
\phi(c_1s_1+c_2s_2+at).
$$
for every $s_1, s_2 \in \F_p$ and $t \in \F_q$. Then we have
$$
\Irr(V) =  \{\chi_{b_1,b_2} \mid b_1,b_2 \in \F_q\} \cup \{\chi^a_{c_1,c_2} \mid a \in \F_q^\times, c_1,c_2 \in \F_p\},
$$
where $\chi^a_{c_1,c_2} = \Ind_{X_1'X_2Z}^{V} \Inf_{X_1'X_2Z/(\tilde X_2 \ker \lambda^a)}^{X_1'X_2Z} \lambda^a_{c_1,c_2}$.
In this case, we get $q^2$ linear characters and $p^2(q-1)$ characters of degree $q/p$.

\subsection{A method for analysing nonabelian cores.} \label{sub:analyse}

Let $\fC = (\mathcal{S}, \mathcal{Z}, \cA, \cL, \cK)$ be a nonabelian core.
The set $\cS$ is a quattern corresponding to the pattern $\Phi^+ \setminus \cA$ and its normal subset $ \cK$. Further, we have
$\cZ=\cZ(\cS) \setminus \cD(\cS)$ as $\fC$ is a nonabelian core,
and we let $\cZ = \{\alpha_{i_1},\dots,\alpha_{i_m}\}$.
For each $\ua=(a_{i_1}, \dots, a_{i_m})\in (\mathbb{F}_q^{\times})^m$, we define $\mu=\mu^{\ua}: X_\cZ \rightarrow \F_q$ by
$\mu(x_{i_h}(t))=a_{i_h}t$ for $h=1,\dots,m$.  Then $\lambda=\lambda^{\ua} = \phi \circ \mu^\ua$ is a linear character of
$X_{\cZ}$.

We give a method to analyse the characters in $\Irr(X_\cS \mid \lambda)$.  We note that
the nature of the resulting parametrization and construction of the characters
may depend on the choice of $\ua$, and we see instances of this dependence in \S\ref{ss:f4cors}.
Further we remark that we do not assert that this method is guaranteed to work for every
nonabelian core, though it does apply for all the cores that we consider in \S\ref{ss:f4cors}.

We set $V = X_\cS/\ker \mu$ and $Z = X_\cZ/\ker \mu$.  Since $\ker \mu \sub \ker \lambda$, we have that
$\lambda$ factors through $Z$ and we also write $\lambda$ for this character of $Z$.
Then we have a bijection between $\Irr(V \mid \lambda)$ and $\Irr(X_\cS \mid \lambda)$ by inflating over $\ker \mu$,
and we work in $\Irr(V \mid \lambda)$ rather than in $\Irr(X_\cS \mid \lambda)$.  Given $\alpha \in \cS \setminus \cZ$
we identify $X_\alpha$ with its image in $V$.

We aim to find subsets $\cI$ and $\cJ$ of $\cS \setminus \cZ$ such that the following hold.
\begin{itemize}
\item $|\cI| = |\cJ|$;
\item $H = X_{\cS \setminus (\cI \cup \cZ)}Z$ is a subgroup of $V$;
\item $Y = X_\cJ \le Z(H)$; and
\item $Y Z$ is a normal subgroup of $V$.
\end{itemize}
We note that this implies that
\begin{itemize}
\item $X = X_\cI$ is a transversal of $H$ in $V$.
\end{itemize}
We would like to apply Lemma \ref{lem:rl} (the reduction lemma), and conditions (i)--(iv) do hold,
but condition (v) may not be satisfied, so we aim to adapt the situation slightly.

We consider the inflation  $\hat{\mu}$ of $\mu$ to $YZ$ and let $\hat \lambda = \phi \circ \hat \mu$ be the
inflation of $\lambda$ to $YZ$.
For $v \in V$, we consider the map $\psi_v : Y \to \F_q$ given by $\psi_v(y)=\hat{\mu}([v, y])$.
Since $Y \le Z(H)$ and $YZ \unlhd V$, we deduce from the commutator
relations that $\psi_v$ is $\F_q$-linear.
We let
$$
Y' = \bigcap_{v \in V} \ker(\psi_v) = \{y \in Y \mid {}^v\hat{\mu}(y)=\hat{\mu}(y) \text{ for all } v \in V\}.
$$
Then $Y'$ is an $\F_q$-subspace of $Y \cong \F_q^{|\cJ|}$.
Also, we define
$$
\tilde H =\mathrm{Stab}_V(\hat \mu) = \{v \in V \mid {}^v\hat{\mu}=\hat \mu\}.
$$
Then
$\tilde{H}$ is a subgroup of $V$ and $\tilde{H}=X'H$ for
$X'=\{x \in X \mid {}^x\hat{\mu}=\hat{\mu}\}$.

To prove that $X'$ and $Y'$ have the same cardinality we assume, for the rest of this subsection, that
$$
W=\{\psi_v \mid v \in V\} \text{ is an } \F_q\text{-subspace of the dual space } \mathrm{Hom}(Y, \F_q).
$$
This condition is easily checked to hold for all nonabelian cores that we examine when $G$ is of rank $4$
by looking at the form of \eqref{eq:commu} defined below in each of these cases.

\begin{lemma} \label{lem:orbit}
$|X'|=|Y'|$.
\end{lemma}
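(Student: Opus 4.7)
The plan is to compute $|W|$ in two different ways and then invoke the standing hypothesis $|\cI|=|\cJ|$, which immediately gives $|X|=q^{|\cI|}=q^{|\cJ|}=|Y|$.

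First, I would count $|W|$ via the map $\alpha \colon V \to W$, $v \mapsto \psi_v$. This map is surjective by the definition of $W$, and I claim its fibers are precisely the right cosets of $\tilde H$. To see this, unfold ${}^v\hat\mu(w)=\hat\mu(v^{-1}wv)$ using $y^v = y[y,v]$ and $[y,v]=[v,y]^{-1}$: because $YZ$ is abelian and normal in $V$, $\hat\mu$ is a homomorphism $YZ\to \F_q$; because $\hat\mu$ is inflated from $Z$ it vanishes on $Y$; and because $Z\le Z(V)$ it is fixed on $Z$. These three facts combine to give ${}^v\hat\mu|_Y=-\psi_v$ and ${}^v\hat\mu|_Z=\hat\mu|_Z$. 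Hence $\alpha(v)=\alpha(v')$ iff ${}^v\hat\mu = {}^{v'}\hat\mu$ iff $v^{-1}v'\in \tilde H$, so $|V|=|W|\cdot|\tilde H|$. Since $H\subseteq \tilde H$ (from $Y\le Z(H)$ together with $Z\le Z(V)$), and $X$ is a transversal of $H$ in $V$, the stabilizer splits as $\tilde H=X'H$ with $|\tilde H|=|X'|\cdot|H|$. Therefore $|W|=|V|/|\tilde H|=|X|/|X'|$.

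Second, I would use the standing hypothesis that $W$ is an $\F_q$-subspace of $\mathrm{Hom}(Y,\F_q)$ to introduce the evaluation pairing
\[
W\times Y/Y'\longrightarrow \F_q, \qquad (\psi,[y])\longmapsto \psi(y).
\]
This pairing is well-defined (by definition of $Y'$), $\F_q$-bilinear, and non-degenerate on both sides: its right radical is $Y'/Y'=0$ by construction of $Y'$, and its left radical is trivial since any $\psi\in W$ vanishing on $Y$ is the zero element of $\mathrm{Hom}(Y,\F_q)$. Standard finite-dimensional linear algebra then gives $\dim_{\F_q}W=\dim_{\F_q}(Y/Y')$, so $|W|=|Y|/|Y'|$.

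Equating the two expressions for $|W|$ yields $|X|/|X'|=|Y|/|Y'|$, and $|X|=|Y|$ concludes the proof. The most delicate step is the fiber identification for $\alpha$, since this is where all four structural hypotheses ($Y\le Z(H)$, $Z\le Z(V)$, $YZ\unlhd V$, and $\hat\mu$ inflated from $Z$) enter simultaneously to ensure both that $\hat\mu$ is a homomorphism and that ${}^v\hat\mu - \hat\mu$ is detected entirely on $Y$ by $\psi_v$; the splitting $\tilde H = X'H$ and the bilinear-duality computation are then routine.
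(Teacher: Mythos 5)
Your proof is correct and follows essentially the same route as the paper: both compute $|W|$ once as $|X|/|X'|$ (the paper via the bijection $\hat\mu^v \mapsto \psi_v$ and orbit--stabilizer, you via identifying the fibers of $v \mapsto \psi_v$ with cosets of $\tilde H$, which is the same count) and once as $|Y|/|Y'|$ (the paper via the annihilator $\mathrm{Ann}_Y(W)=Y'$, you via the equivalent non-degenerate pairing $W \times Y/Y' \to \F_q$), then equate and use $|X|=|Y|$.
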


\begin{proof}
We have that the annihilator $\mathrm{Ann}_Y(W)$ of $W$ is $Y'$ by definition. Hence we have $\dim Y = \dim Y' + \dim W$, that is, $|Y|/|Y'|=|W|$.

We denote the $V$-orbit of $\hat{\mu}$ in $\mathrm{Hom}(YZ, \F_q)$ by $\hat{\mu}^V$. For $v, v' \in V$ we have that
$\psi_v=\psi_{v'}$ if and only if $\hat{\mu}([v, y])=\hat{\mu}([v', y])$ for all $y \in Y$ if and only if $\hat{\mu}(y^v)=\hat{\mu}(y^{v'})$ for all $y \in Y$.
Then the map
\begin{align*}
\hat{\mu}^V& \longrightarrow W  \\
\hat{\mu}^v & \mapsto \psi_v
\end{align*}
is well-defined and injective. It is clear that it is also surjective. Therefore, we have $|W|=|\hat{\mu}^V|$. Now by the
orbit-stabilizer theorem  we have that
$$|\hat{\mu}^V|=|V|/\mathrm{Stab}_V(\hat{\mu})=|V|/|\tilde{H}|=|X|/|X'|.$$
Combining the above equalities, we get
$$|Y|/|Y'|=|W|=|\hat{\mu}^V|=|X|/|X'|.$$
Since $|Y|=|X|$, the claim follows.
\end{proof}

Moreover, we have the following property about $X'$.

\begin{lemma} \label{lem:X'}
Let $x \in X$ be such that ${}^x\hat{\lambda}=\hat{\lambda}$. Then $x \in X'$.
\end{lemma}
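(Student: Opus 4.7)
The plan is to reinterpret the hypothesis as a trace-zero condition on the $\F_q$-linear form $\psi_x : Y \to \F_q$, and then upgrade ``trace-zero'' to ``identically zero'' by exploiting $\F_q$-linearity. Throughout, I will use that $YZ$ is abelian (since $Z \le Z(V)$ and $Y \le Z(H)$ is itself abelian), so that $\hat\mu$ is a group homomorphism on $YZ$.

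First I would translate the hypothesis. Since $\hat\lambda = \phi \circ \hat\mu$, the condition ${}^x\hat\lambda = \hat\lambda$ says $\phi\bigl(\hat\mu(v^x) - \hat\mu(v)\bigr) = 1$, i.e.\ $\hat\mu(v^x) - \hat\mu(v) \in \ker \phi = \ker \Tr$, for every $v \in YZ$. Because $Z \le Z(V)$, conjugation by $x$ fixes $Z$ pointwise, so it suffices to test $v = y \in Y$. In the abelian group $YZ$ we may rewrite
$$\hat\mu(y^x) - \hat\mu(y) = \hat\mu(y^{-1} y^x) = \hat\mu([y,x]) = -\hat\mu([x,y]) = -\psi_x(y),$$
using $[y,x] = [x,y]^{-1}$. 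Hence the hypothesis is equivalent to $\psi_x(y) \in \ker \Tr$ for every $y \in Y$.

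Next I would invoke the $\F_q$-linearity of $\psi_x$, which is part of the standing assumption on this subsection. The image $\mathrm{im}(\psi_x)$ is then an $\F_q$-subspace of the one-dimensional $\F_q$-space $\F_q$, so it is either $\{0\}$ or all of $\F_q$. On the other hand $\ker \Tr$ is a \emph{proper} subset of $\F_q$, since $\Tr$ is surjective. The inclusion $\mathrm{im}(\psi_x) \subseteq \ker \Tr$ therefore forces $\mathrm{im}(\psi_x) = \{0\}$; that is, $\hat\mu([y,x]) = 0$ for every $y \in Y$. Running the computation of the previous paragraph backwards gives $\hat\mu(v^x) = \hat\mu(v)$ for every $v \in YZ$, i.e.\ ${}^x\hat\mu = \hat\mu$, which is exactly the condition $x \in X'$.

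There is no genuine obstacle here. The lemma simply records the slogan that an $\F_q$-linear map into $\F_q$ whose image is contained in the $\F_p$-subspace $\ker \Tr$ must vanish, combined with the observation that $Z$ is central, so only the $Y$-part of $YZ$ can present an obstruction to stabilization.
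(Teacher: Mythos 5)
Your proposal is correct and follows essentially the same route as the paper: both reduce the hypothesis to the statement that the $\F_q$-linear form $y \mapsto \hat\mu(y^x)-\hat\mu(y) = -\psi_x(y)$ has image inside $\ker\phi = \ker\Tr$, and then use the dichotomy that an $\F_q$-linear map into $\F_q$ has image $0$ or all of $\F_q$ to force it to vanish. The only cosmetic difference is that you phrase the final step via properness of $\ker\Tr$ while the paper phrases it as $\phi$ being nontrivial; these are the same observation.
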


\begin{proof} We show that for such $x$ we have ${}^x\hat{\mu}=\hat{\mu}$. The hypothesis is equivalent to
\begin{equation} \label{eq:last}
\phi \circ {}^x\hat{\mu} = \phi \circ \hat{\mu}, \text{ that is, } \phi \circ ({}^x\hat{\mu} - \hat{\mu})=1.
\end{equation}
For $y \in Y$ and $z \in Z$, we have
$${}^x\hat{\mu}(yz)=\hat{\mu}(y^xz^x)={}^x\hat{\mu}(y)+\hat{\mu}(z)=\hat{\mu}([y, x])+\mu(z)=-\psi_x(y)+\mu(z),$$
then using the assumptions that $Y \le Z(H)$ and $YZ \unlhd V$, we deduce from the commutator
relations that ${}^x\hat{\mu}$ is $\F_q$-linear. Hence ${}^x\hat{\mu} - \hat{\mu}$ is also $\F_q$-linear.
Therefore the image of ${}^x\hat{\mu} - \hat{\mu}$ is either $0$
or $\F_q$. But if it were $\F_q$, then \eqref{eq:last} would imply $\phi(c)=1$ for every $c \in \F_q$, which is a contradiction.
Thus we have ${}^x\hat{\mu}=\hat{\mu}$, so $x \in X'$.
\end{proof}

We write $\cI =\{\alpha_{i_1},\cdots,\alpha_{i_m}\}$ and
$\cJ = \{\alpha_{j_1},\cdots,\alpha_{j_m}\}$,
such that $i_1 \le \dots \le i_m$ and $j_1 \le \dots \le j_m$.
In general, $Y'$ and $X'$ can be determined by the following equation,
\begin{equation}\label{eq:commu}
\hat{\mu}([x_{\alpha_{j_1}}(s_{j_1}) \cdots x_{\alpha_{j_m}}(s_{j_m}), x_{\alpha_{i_1}}(t_{i_1}) \cdots x_{\alpha_{i_m}}(t_{i_m})])=0.
\end{equation}
We note that as the map $\psi_x$ for $x \in X$ is $\F_q$-linear, the left hand side of \eqref{eq:commu}
is linear in $s_{j_1}, \dots, s_{j_m}$.  Therefore,
the values of $s_{j_1}, \dots, s_{j_m}$ such that \eqref{eq:commu} holds
for every $t_{i_1}, \dots, t_{i_m}$ form an $\F_q$-subspace of $Y$, which determines $Y'$.

Under an additional assumption on $Y$, we are able to apply Lemma \ref{lem:rl} in
the following lemma.  We define $\bar H$ to be the preimage of $\tilde H$ in $X_\cS$.

\begin{lemma}\label{pr:reduction}
Suppose that there exists a subgroup $\tilde{Y}$ of $Y$ such that $Y=Y' \times \tilde{Y}$ and $[X, \tilde{Y}] \subseteq \tilde{Y}Z.$
Then we have a bijection
\begin{align*}
\Irr(\tilde H/\tilde Y \mid \lambda) & \to
\Irr(V \mid \lambda) \\
\chi &\mapsto \Ind_{\tilde H}^V \Inf_{\tilde H/\tilde Y}^{\tilde H} \chi
\end{align*}
Consequently we have a bijection
\begin{align*}
\Irr(\tilde H/\tilde Y \mid \lambda) & \to
\Irr(X_{\cS} \mid \lambda) \\
\chi &\mapsto \Ind_{\bar H}^{X_\cS} \Inf_{\tilde H/\tilde Y}^{\bar H} \chi
\end{align*}
\end{lemma}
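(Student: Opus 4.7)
The plan is to apply the reduction lemma (Lemma \ref{lem:rl}) in the group $V$, taking $H_{\mathrm{lem}} = \tilde H$, $Z_{\mathrm{lem}} = Z$, $Y_{\mathrm{lem}} = \tilde Y$ and $\lambda_{\mathrm{lem}} = \lambda$. Since $\tilde H = X'H$ and $V = XH$, any subset $\tilde X \sub X$ whose image in $X/X'$ is a transversal serves as a transversal of $\tilde H$ in $V$, so the five hypotheses of Lemma \ref{lem:rl} must be checked for this data.

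Conditions (i), (iii) and (iv) are straightforward: $Z$ is central in $V$ by construction; $Z \cap \tilde Y \sub Z \cap Y = 1$ since $\cJ \cap \cZ = \varnothing$ and $\ker\mu \sub X_\cZ$; and $Z\tilde Y$ is normal in $V = XH$ because $H$ centralizes $Y$ and $[X, \tilde Y] \sub \tilde Y Z$ by hypothesis. The first subtle point is condition (ii), $\tilde Y \trianglelefteq \tilde H$, where only conjugation by $X'$ requires work. Given $x \in X'$ and $\tilde y \in \tilde Y$, write $\tilde y^x = \tilde y' z$ with $\tilde y' \in \tilde Y$ and $z \in Z$ using $[X, \tilde Y] \sub \tilde Y Z$; since ${}^x\hat\mu = \hat\mu$ and $\hat\mu$ vanishes on $Y$, we obtain $0 = \hat\mu(\tilde y^x) = \hat\mu(\tilde y') + \mu(z) = \mu(z)$. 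Because we have already quotiented by $\ker\mu$, the map $\mu$ is injective on $Z$, forcing $z = 1$ and hence $\tilde y^x \in \tilde Y$.

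The main step is condition (v), for which I will establish $\mathrm{Stab}_V(\tilde\lambda) = \tilde H$. For $v \in V$, the map ${}^v\hat\mu - \hat\mu$ is $\F_q$-linear on $YZ$ (as in the proof of Lemma \ref{lem:X'}), vanishes on $Z$ because $Z$ is central, vanishes on $Y'$ by definition of $Y' = \bigcap_v \ker\psi_v$, and equals $-\psi_v$ on $\tilde Y$. Since a nonzero $\F_q$-linear map $YZ \to \F_q$ is surjective and $\phi$ is nontrivial, the equality ${}^v\tilde\lambda = \tilde\lambda$ is equivalent to $\psi_v|_{\tilde Y} = 0$, hence to $\psi_v|_Y = 0$, hence to ${}^v\hat\mu = \hat\mu$, i.e.\ to $v \in \tilde H$. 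Therefore for distinct $x, y \in \tilde X$ the element $y^{-1}x \notin \tilde H$ witnesses ${}^x\tilde\lambda \ne {}^y\tilde\lambda$. Moreover, Lemma \ref{lem:orbit} together with $Y = Y' \times \tilde Y$ gives $|\tilde X| = |X|/|X'| = |Y|/|Y'| = |\tilde Y|$, so the ``moreover'' clause of Lemma \ref{lem:rl} yields the first bijection $\Irr(\tilde H/\tilde Y \mid \lambda) \to \Irr(V \mid \lambda)$.

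Finally, for the second bijection, $\ker\mu \sub X_\cZ \sub Z(X_\cS)$, so the preliminary correspondence recalled just before Lemma \ref{lem:rl} identifies $\Irr(X_\cS \mid \lambda)$ with $\Irr(V \mid \lambda)$ via inflation along $X_\cS \to V$, and analogously $\Irr(\bar H \mid \lambda)$ with $\Irr(\tilde H \mid \lambda)$. Applying \eqref{eq:swap} to transport the inflation through the induction converts the map $\chi \mapsto \Ind_{\tilde H}^V \Inf_{\tilde H/\tilde Y}^{\tilde H}\chi$ into $\chi \mapsto \Ind_{\bar H}^{X_\cS} \Inf_{\tilde H/\tilde Y}^{\bar H}\chi$, which yields the stated bijection.
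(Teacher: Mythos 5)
Your proof is correct and follows essentially the same route as the paper: you verify conditions (i)--(v) of Lemma \ref{lem:rl} for $\tilde H$, $\tilde Y$, $Z$ in $V$, with condition (v) resting on the $\F_q$-linearity of ${}^v\hat\mu-\hat\mu$ (the content of Lemma \ref{lem:X'}) and the counting supplied by Lemma \ref{lem:orbit}. The only cosmetic differences are that you check $\tilde Y\trianglelefteq\tilde H$ via injectivity of $\mu$ on $Z$ rather than via $X'$ normalizing $Y$, and you obtain the second bijection from the inflation correspondence over $\ker\mu$ together with \eqref{eq:swap} instead of a second direct application of Lemma \ref{lem:rl}; both variants are equally valid.
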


\begin{proof}
We want to check that $\tilde{H}, \tilde{X}, \tilde{Y}$ and $Z$ satisfy all the assumptions of
Lemma \ref{lem:rl} as subgroups of $V$ with respect to $\lambda \in \Irr(Z)$.
Clearly we have that $Z \le Z(V)$ and $\tilde Y \cap Z=1$.
By assumption, we have that $X$ normalizes $\tilde{Y}Z$, and we have that $H$ centralizes $\tilde YZ$,
so $\tilde{Y}Z \trianglelefteq V$.
Since $\tilde{Y} \le Y \le Z(H)$, we have that $\tilde{Y}$ is normalized by $H$.
Moreover, if $x' \in X'$ and $y \in Y$, by definition of $X'$ we have that
$$\hat{\mu}(y^{-1}y^{x'})=\hat{\mu}(y^{-1})+\hat{\mu}(y^{x'})=0,$$
and since $\ker\hat{\mu}=Y \ker \mu$ we have that $X'$ normalizes $Y$. Along with the assumption that $[X,\tilde Y] \sub \tilde YZ$,
we deduce that $X'$ normalizes $\tilde Y$.  Hence $\tilde Y \trianglelefteq \tilde H$.

Now we are left to check condition (v) of the reduction lemma. We write
$\tilde \lambda \in \Irr(\tilde Y Z)$ for the
inflation of $\lambda$ to $\tilde Y Z$, and note that
$\tilde \lambda =\hat{\lambda}|_{\tilde Y Z}$. Let $\tilde{X}$ be
a transversal of $\tilde{H}$ in $V$. Assume that
${}^{\tilde{x}_1}\tilde \lambda={}^{\tilde{x}_2}\tilde{\lambda}$ for
$\tilde{x}_1, \tilde{x}_2 \in \tilde{X}$.
Let $y \in Y$ and $z \in Z$ and write $y=y'\tilde{y}$, where $y' \in Y'$ and $\tilde{y} \in \tilde{Y}$.
We have
\begin{align*}
{}^{\tilde{x}_1}\hat{\lambda}(y'\tilde{y}z)
&= \hat{\lambda}(y'^{\tilde{x}_1})\tilde{\lambda}(\tilde{y}^{\tilde{x}_1})\lambda(z) \\
&= \hat{\lambda}(y')({}^{\tilde{x}_1} \tilde{\lambda})(\tilde{y})\lambda(z) \\
&= \hat{\lambda}(y'^{\tilde{x}_2})({}^{\tilde{x}_2} \tilde{\lambda})(\tilde{y})\lambda(z) \\
&= \hat{\lambda}(y'^{\tilde{x}_2})\hat{\lambda}(\tilde{y}^{\tilde{x}_2})\lambda(z) \\
&= {}^{\tilde{x}_2}\hat{\lambda}(y'\tilde{y}z).
\end{align*}
In the above sequence of equalities we use that $\hat{\lambda}(y'^{\tilde{x}_1}) = \hat{\lambda}(y') = \hat{\lambda}(y'^{\tilde{x}_2})$
by definition of $Y'$, that $\tilde{y}^{\tilde{x}_1}, \tilde{y}^{\tilde{x}_2} \in \tilde{Y}Z$ since $[X, \tilde{Y}] \subseteq \tilde{Y}Z$,
and that ${}^{\tilde{x}_1}\tilde \lambda={}^{\tilde{x}_2}\tilde{\lambda}$ by assumption.
Hence we have ${}^{\tilde{x}_1\tilde{x}_2^{-1}}\hat \lambda = \hat \lambda$. By Lemma \ref{lem:X'}, this implies $\tilde{x}_1\tilde{x}_2^{-1} \in X'$ and
thus $\tilde{x}_1 = \tilde{x}_2$ as $\tilde{X}$ is a transversal of $\tilde{H}$ in $V$. By Lemma \ref{lem:orbit},
we have that $|X'|=|Y'|$. Thus we can apply Lemma \ref{lem:rl} to deduce the first bijection.

We can now apply Lemma \ref{lem:rl} also to deduce the second bijection.
\end{proof}

We note that if $[X, Y] \subseteq Z$, then we may take an arbitrary complement $\tilde Y$ of $Y'$ in $Y$,
and the assumption $[X, \tilde{Y}] \subseteq \tilde{Y}Z$
is obviously satisfied.

Also we note that the parametrization of characters resulting from Lemma \ref{pr:reduction} does not
actually depend on the choice of $\tilde Y$.  This can be shown by observing that
the restriction of $\Ind_{\tilde H}^V \Inf_{\tilde H/\tilde Y}^{\tilde H} \chi$ to
$Y'Z$ is a multiple of $\chi$ viewed as a character of $Y'Z$.

If $Y'$ is central in $\tilde H / \tilde Y$, then we can extend $\lambda \in \Irr(Z)$ to $Y'$. This
turns out to be useful when applying the reduction lemma again in
$\tilde H / \tilde Y$ in the analysis in \S\ref{ss:f4cors}.

\begin{remark} \label{rem:1go}
Suppose that Lemma \ref{pr:reduction} applies and let $\psi \in \Irr(\tilde H/\tilde Y \mid \lambda)$.
Then we have  that
$\Ind_{\bar H}^{X_\cS} \Inf_{\tilde H/\tilde Y}^{\bar H} \psi \in \Irr(X_{\cS})$,
and
$$
\overline \psi = \Ind_{X_{\cS \cup \cK}}^U \Inf_{X_\cS}^{X_{\cS \cup \cK}} \Ind_{\bar H}^{X_\cS} \Inf_{\tilde H/\tilde Y}^{\bar H} \psi \in \Irr(U)_\fC
$$
by Theorem \ref{thm:1go}.
Since $X_{\cK} \trianglelefteq U$, we have that
$\bar{H}X_{\cK}$ is a subgroup of $X_{\cS \cup \cK}$,
and we have $X_{\cK} \trianglelefteq \bar HX_{\cK} $.
Then using similar arguments
to those in the proof of Lemma \ref{lem:1go}, we can show that
$$
\overline{\psi}= \Ind_{\bar{H}X_{\cK}}^U \Inf_{\tilde{H}/\tilde{Y}}^{\bar{H}X_{\cK}} \psi.
$$
In \S\ref{ss:f4cors}, we apply this argument (sometimes iteratively) to show that each irreducible character
considered there can be obtained as an induced character of a linear character.

Consider the case that $Y' = 1$ for all choices of $\lambda$, and that $Y$ is
normal in $\bar H$. We have $\bar H/Y = X_{\cS \setminus (\cI \cup \cJ)}$.
Defining
$$
\overline{\psi}= \Ind^{\cA \cup \cI} \Inf_{\cK \cup \cJ} \psi
$$
for $\psi \in \Irr(X_{\cS \setminus (\cI \cup \cJ)})_{\cZ}$ sets up a bijection
from $\Irr(X_{\cS \setminus (\cI \cup \cJ)})_{\cZ}$ to $\Irr(U)_\fC$.
\end{remark}

\subsection{The nonabelian cores for types $\rB_4$ and $\rF_4$}  \label{ss:f4cors}

For $G$ of type $\rB_4$ there is one nonabelian core and for $G$ of type $\rF_4$, there are six nonabelian cores.
We analyse these case by case using the method given in \S\ref{sub:analyse}, and we use
the notation introduced there.

For the nonabelian core in $U_{\rB_4}$ and for one of the nonabelian cores in $U_{\rF_4}$,
we find $\cI$ and $\cJ$ such that $Y' = 1$ for all $\ua \in (\F_q^\times)^m$, and $Y$ is normal in $\bar H$ with $\bar H/Y$ abelian.
For such a core $\fC$ we let $\cS \setminus (\cI \cup \cJ) = \{\alpha_{h_1},\dots,\alpha_{h_n}\}$, and
for $\ub = (b_{h_1},\dots,b_{h_n}) \in \F_q^n$, we let $\lambda_\ub^\ua \in \Irr(H)$ be the linear character
defined by $\lambda_\ub^\ua(x_{\alpha_{h_j}}(t)) = \phi(b_{h_j}t)$ for $j=1, \dots, n$, and
$\lambda_\ub^\ua|_{X_\cZ} = \lambda^\ua$.
Let
$$
\chi_\ub^\ua = \Ind^{\cA \cup \cI} \Inf_{\cK \cup \cJ} \lambda_\ub^\ua.
$$
Then using Remark \ref{rem:1go}, we see that
$$
\Irr(U)_\fC = \{\chi_\ub^\ua \mid \ua \in (\F_q^\times)^m, \ub \in \F_q^n\}.
$$
For these cores we include no further details, and just give $\cI$ and $\cJ$
in the tables in the appendix.

Below we consider the remaining nonabelian cores in $U = U_{\rF_4}$.
We denote these cores by $\fC^1$, $\fC^2$, $\fC^3$, $\fC^4$ and $\fC^5$.
For each $\fC^i = (\cS,\cZ,\cA,\cL,\cK)$ we give
$\cS$, $\cZ$, $\cA$ and $\cL$; we note that $\cK$ can then
easily be determined. Then we analyse $\Irr(X_\cS)_\cZ$ before explaining
how this parameterizes $\Irr(U)_{\fC^i}$ and how these characters
can be obtained by inducing linear characters using Lemma \ref{pr:reduction}
and Remark \ref{rem:1go}.

We notice that if $\mathfrak{C}=(\cS, \cZ, \cA, \cL, \cK)$ and $\mathfrak{C'}=(\cS', \cZ', \cA', \cL', \cK')$
are cores of $U_{\rm{F_4}}$, then $(|\cS|, |\cZ|) \ne (|\cS'|, |\cZ'|)$. In particular, $X_{\cS}$ is not isomorphic to $X_{\cS'}$.

\medskip
\noindent {\bf The nonabelian core in $\fC^1$.}
This core occurs for  $\Sigma=\{\alpha_{11}, \alpha_{13}\}$, and we
have
\begin{itemize}
\item $\mathcal{S}=\{\alpha_1,\alpha_{2},\alpha_4,\alpha_{5},\alpha_{6},\alpha_{7},\alpha_{9},\alpha_{10},\alpha_{11},\alpha_{13}\}$,
\item $\mathcal{Z}=\{\alpha_{5},\alpha_{10},\alpha_{11},\alpha_{13}\}$,
\item $\cA = \{\alpha_3\}$ and
\item $\cL = \{\alpha_8\}$.
\end{itemize}
Using the method of \S\ref{sub:analyse}, we take
\begin{itemize}
\item $Y=X_2X_6X_9$,
\item $X=X_1X_4X_7$ and then we have that
\item $H = YZ$.
\end{itemize}
In this case (\ref{eq:commu}) is
$$
s_2(-a_5t_1+a_{10}t_7)+s_6(a_{10}t_4-a_{13}t_7)+s_9(-a_{11}t_1+a_{13}t_4) = 0.
$$

For $a_{11} \ne a_5 a_{13}^2/a_{10}^2$, we have $Y'=1$ and $Y$ is
normal in $\bar H$.
Then as explained in Remark \ref{rem:1go} we get the family of characters
$$
\Irr(U)_{\fC^1}^1=\{\chi^{a_5, a_{10}, a_{11}^*, a_{13}} \mid a_5, a_{10}, a_{11}^*, a_{13} \in \mathbb{F}_q^{\times}, a_{11}^* \ne a_5
(a_{13}/a_{10})^2\} \sub \Irr(U)_{\fC^1},
$$
where
$$
\chi^{a_5, a_{10}, a_{11}^*, a_{13}} = \Ind^{\cA \cup \cI} \Inf_{\cK \cup \cJ} \lambda^{a_5, a_{10}, a_{11}^*, a_{13}}.
$$
We have that $\Irr(U)_{\fC^1}^1$ consists of $(q-1)^3(q-2)$ characters of degree $q^4$.

For $a_{11} = a_5 a_{13}^2/a_{10}^2$, we have $X'=X_{1,4,7}=\{x_{1,4,7}(t) \mid t \in \F_q\}$ and $Y'=X_{2,6,9} =\{x_{2,6,9}(s) \mid s \in \F_q\}$, where
$$
x_{1,4,7}(t) =x_1(a_{10}^2t)x_4(a_5a_{13}t)x_7(a_5a_{10}t) \,\, \text{ and } \,\, x_{2,6,9}(s)=x_2(a_{13}^2s)x_6(a_{10}a_{13}s)x_9(-a_{10}^2s).
$$
We can take any complement of $Y'$ in $Y$, and we choose $\tilde Y= X_2X_9$.
Then we have $\tilde{H}/\tilde{Y}=X'Y'Z$, which is abelian. We denote by $\lambda^{a_5, a_{10}, a_{13}}$ the character $\lambda^{a_5, a_{10}, a_{11}, a_{13}}$
with $a_{11} = a_5 a_{13}^2/a_{10}^2$. For $b_{1,4,7}, b_{2,6,9} \in \F_q$, we define $\lambda_{b_{1,4,7}, b_{2,6,9}}^{a_5, a_{10}, a_{13}} \in \Irr(X'Y'Z)$ by
extending $\lambda^{a_5, a_{10}, a_{13}}$, and setting $\lambda_{b_{1,4,7}, b_{2,6,9}}^{a_5, a_{10}, a_{13}}(x_{1,4,7}(t)) = \phi(b_{1,4,7}t)$ and
$\lambda_{b_{1,4,7}, b_{2,6,9}}^{a_5, a_{10}, a_{13}}(x_{2,6,9}(t)) = \phi(b_{2,6,9}t)$ for every $t \in \F_q$.
Then as explained in Remark \ref{rem:1go} we get the family of characters
$$
\Irr(U)_{\fC^1}^2=\{\chi_{b_{1,4,7}, b_{2,6,9}}^{a_5,a_{10},a_{13}} \mid a_5,a_{10},a_{13} \in
\mathbb{F}_q^{\times}, b_{1,4,7}, b_{2,6,9} \in \mathbb{F}_q\},
$$
where
$$
\chi_{b_{1,4,7},b_{2,6,9}}^{a_5,a_{10},a_{13}} = \Ind_{\bar H X_\cK}^U \Inf_{\tilde{H}/\tilde{Y}}^{\bar H X_\cK}
\lambda_{b_{1,4,7}, b_{2,6,9}}^{a_5, a_{10}, a_{13}}.
$$
We have that $\Irr(U)_{\fC^1}^2$ consists
of $q^2(q-1)^3$ characters of degree $q^3$.

We have that  $\Irr(U)_{\fC^1} = \Irr(U)_{\fC^1}^1 \cup \Irr(U)_{\fC^1}^2$ and
this gives all the irreducible characters corresponding to $\fC^1$.

\medskip
\noindent {\bf The nonabelian core $\fC^2$.}
This core occurs for  $\Sigma=\{\alpha_{12}, \alpha_{16}\}$, and we
have
\begin{itemize}
\item $\cS=\{\alpha_1,\alpha_{2},\alpha_3,\alpha_{5},\alpha_{6},\alpha_{7},\alpha_8,\alpha_{9},\alpha_{10},\alpha_{12},\alpha_{16}\}$,
\item $\cZ=\{\alpha_8,\alpha_9,\alpha_{12},\alpha_{16}\}$,
\item $\cA = \{\alpha_4\}$ and
\item $\cL = \{\alpha_{13}\}$.
\end{itemize}
Using the method of \S\ref{sub:analyse}, we take
\begin{itemize}
\item $Y=X_5 X_6 X_{10}$,
\item $X=X_1X_3X_7$ and then we have that
\item $H=X_2YZ$.
\end{itemize}
In this case (\ref{eq:commu}) is
$$
s_5(a_8t_3+a_{12}t_7)+s_6(-a_8t_1-2a_9t_3)+s_{10}(-a_{12}t_1+2a_{16}t_7)=0.
$$

For $a_{16} \ne a_9 a_{12}^2/a_8^2$, we have $Y'=1$ and $Y$ is normal in $\bar H$. Further, $\bar H/Y = X_2X_\cZ$, so
as explained in Remark \ref{rem:1go} we get the family of characters
$$
\Irr(U)_{\fC^2}^1=\{\chi_{b_2}^{a_8, a_{9}, a_{12}, a_{16}^*} \mid a_8, a_{9}, a_{12}, a_{16}^* \in \mathbb{F}_q^{\times}, a_{16}^* \ne a_9
(a_{12}/a_8)^2, b_2 \in \mathbb{F}_q\} \sub \Irr(U)_{\fC^2},
$$
where
$$
\chi_{b_2}^{a_8, a_{9}, a_{12}, a_{16}^*}= \Ind^{\cA \cup \cI} \Inf_{\cK \cup \cJ} \lambda_{b_2}^{a_8, a_{9}, a_{12}, a_{16}^*},
$$
and $\lambda_{b_2}^{a_8, a_{9}, a_{12}, a_{16}^*} \in \Irr(\bar H/Y)$ is defined in the usual way.
We have that $\Irr(U)_{\fC^2}^1$ consists of $q(q-1)^3(q-2)$ characters of degree $q^4$.

Now suppose $a_{16} = a_9 a_{12}^2/a_8^2$. We have $X' = \{x_{1,3,7}(t) \mid t \in \F_q\}$ and
$Y' = \{x_{5,6,10}(s) \mid s \in \F_q\}$, where
$$
x_{1,3,7}(t)=x_1(2a_9a_{12}t)x_3(-a_8a_{12}t)x_7(a_8^2t)\,\, \text{ and } \,\, x_{5, 6, 10}(s)=x_5(2a_9a_{12}s)x_6(a_8a_{12}s)x_{10}(-a_8^2s).
$$
We can take any complement of $Y'$ in $Y$ and we choose $\tilde Y= X_5X_{10}$.
Then we have $\tilde{H}/\tilde{Y}=X_2X'YZ/\tilde{Y}$ and $Y' \subseteq Z(\tilde{H}/\tilde{Y})$. From now on, we denote by $\lambda^{a_8, a_{9}, a_{12}}$
the character $\lambda^{a_8, a_{9}, a_{12}, a_{16}}$ with $a_{16} = a_9 a_{12}^2/a_8^2$.

A computation in $\tilde{H}/\tilde{Y}$ gives
$$
[x_2(s), x_{1,3,7}(t)]=x_{5, 6, 10}(-st).
$$
Therefore, $\tilde{H}/\tilde{Y}$ is the direct product of $Z$ and $X_2X'Y/\tilde Y$.
Further $X_2X'Y/\tilde Y$ is isomorphic to the three-dimensional group $V_f$ for $f(s, t)=-st$ from \S\ref{sub:3dim}.

We label the linear characters of $X_2X'Y/\tilde Y$ by $\chi_{b_2,b_{1,3,7}}$.
By tensoring these characters with $\lambda^{a_8, a_{9},a_{12}}$ and then
applying $\Ind_{\bar H X_\cK}^U \Inf_{\tilde{H}/\tilde{Y}}^{\bar H X_\cK}$ we obtain the
family of characters
$$
\Irr(U)_{\fC^2}^2=\{\chi_{b_2, b_{1, 3, 7}}^{a_8, a_{9},  a_{12}} \mid a_8, a_{9}, a_{12} \in
\mathbb{F}_q^{\times}, b_2, b_{1, 3, 7} \in \F_q\},
$$
which consists of $q^2(q-1)^3$ characters of degree $q^3$.

Let $a_{5, 6, 10} \in \F_q^{\times}$. We write $\lambda^{a_8, a_{9}, a_{12},a_{5,6,10}}$ for the linear character of $Y'Z$
defined by extending $\lambda^{a_8, a_9, a_{12}}$ to $Y'$ in the usual way.  By applying $\Ind_{X'YX_\cZ X_\cK}^U \Inf_{YZ/\tilde{Y}}^{X'YX_\cZ X_\cK}$
to these linear characters we obtain the family of characters
$$
\Irr(U)_{\fC^2}^3=\{\chi^{a_8, a_{9}, a_{12}, a_{5, 6, 10}} \mid a_8, a_{9}, a_{12}, a_{5, 6, 10} \in \F_q^{\times}\},
$$
which consists of $(q-1)^4$ characters of degree $q^4$.

We have $\Irr(U)_{\fC^2} = \Irr(U)_{\fC^2}^1 \cup \Irr(U)_{\fC^2}^2 \cup \Irr(U)_{\fC^2}^3$ and this gives all the
irreducible characters corresponding to $\fC^2$.

\medskip
\noindent {\bf The nonabelian core $\fC^3$.}
This core occurs for  $\Sigma=\{\alpha_{14}, \alpha_{15}\}$, and we
have
\begin{itemize}
\item $\cS=\{\alpha_2,\alpha_4,\alpha_6,\alpha_7,\alpha_8,\alpha_{10},\alpha_{11},\alpha_{14},\alpha_{15}\}$,
\item $\cZ=\{\alpha_{10},\alpha_{14},\alpha_{15}\}$,
\item $\cA = \{\alpha_1,\alpha_3,\alpha_5\}$ and
\item $\cL = \{\alpha_9,\alpha_{12},\alpha_{13}\}$.
\end{itemize}
Using the method of \S\ref{sub:analyse}, we take
\begin{itemize}
\item $Y=X_6 X_7 X_{11}$,
\item $X=X_2X_4X_8$ and then we have that
\item $H=YZ$.
\end{itemize}
In this case (\ref{eq:commu}) is
$$
s_6(a_{10}t_4+2a_{14}t_8)+s_7(-a_{10}t_2+a_{15}t_8)+s_{11}(-a_{14}t_2+a_{15}t_4)=0.
$$

For $p \ge 5$, we have $Y'=1$ and $Y$ is normal in $\bar H$.  So
as explained in Remark \ref{rem:1go} we obtain
$$
\Irr(U)_{\fC^3}^{p \ge 5}=\{\chi^{a_{10}, a_{14}, a_{15}} \mid  a_{10}, a_{14}, a_{15} \in \F_q^{\times} \}
$$
by applying $\Ind^{\cA \cup \cI} \Inf_{\cK \cup \cJ}$ to the characters in $\Irr(X_{\cS \setminus (\cI \cup \cJ)})_\cZ$.
We have that $\Irr(U)_{\fC^3}^{p \ge 5}$ consists of $(q-1)^3$ characters of degree $q^6$.

Now suppose $p=3$. We have $X'=\{x_{2, 4, 8}(t) \mid t \in \F_q\}$ and $Y'=\{x_{6, 7, 11}(s) \mid s \in \F_q\}$, where
$$
x_{2,4,8}(t)=x_2(a_{15}t)x_4(a_{14}t)x_8(a_{10}t) \,\, \text{ and } \,\, x_{6, 7, 11}(s)=x_6(a_{15}s)x_7(a_{14}s)x_{11}(-a_{10}s).
$$
We can take $\tilde Y = X_6X_{11}$, and we have that $\tilde H/\tilde Y \cong X'Y'Z$ is abelian. This yields
$$
\Irr(U)_{\fC^3}^{p =3} =\{\chi_{b_{2, 4, 8}, b_{6, 7, 11}}^{a_{10}, a_{14}, a_{15}} \mid  a_{10}, a_{14}, a_{15} \in \mathbb{F}_q^{\times}, b_{2,
4, 8}, b_{6, 7, 11} \in \mathbb{F}_q \},
$$
where these characters are obtained by applying $\Ind_{\bar H X_\cK}^U \Inf_{\tilde H/\tilde Y}^{\bar H X_\cK} $ to the linear characters
$\lambda_{b_{2, 4, 8}, b_{6, 7, 11}}^{a_{10}, a_{14}, a_{15}}$ of $\tilde H/\tilde Y$, which are labelled in the usual way.
We have that $\Irr(U)_{\fC^3}^{p = 3}$ consists of $q^2(q-1)^3$ characters of degree $q^5$.

\medskip
\noindent {\bf The nonabelian core $\fC^4$.}
This core occurs for  $\Sigma=\{\alpha_{11}, \alpha_{12}, \alpha_{13}\}$, and we
have
\begin{itemize}
\item $\cS=\{ \alpha_{1}, \dots, \alpha_{13}\}$,
\item $\cZ=\Sigma=\{\alpha_{11}, \alpha_{12}, \alpha_{13}\}$,
\item $\cA = \varnothing$ and
\item $\cL = \varnothing$.
\end{itemize}
Using the method of \S\ref{sub:analyse}, we take
\begin{itemize}
\item $Y=X_5X_8X_9X_{10}$,
\item $X=X_{1}X_{3}X_{4}X_{7}$ and then we have that
\item $H=X_2X_6YZ$.
\end{itemize}
In this case \eqref{eq:commu} is
$$
s_5(-a_{11}t_3^2+a_{12}t_7)+s_8(-2a_{11}t_3+a_{12}t_4)+s_9(-a_{11}t_1+a_{13}t_4)+s_{10}(-a_{12}t_1-a_{13}t_3) = 0.
$$

For $p \ge 5$, we have that $Y'=1$, and $Y$ is normal in $\bar H$.  Also we have $\bar H/Y \cong X_2X_6X_\cZ$ is abelian.  For
$b_2, b_6 \in \F_q$ we let $\lambda_{b_2, b_6}^{a_{11}, a_{12}, a_{13}} \in \Irr(X_{\cS \setminus (\cI \cup \cJ)})_\cZ$ be the linear
character with the usual notation.
Then as explained in Remark \ref{rem:1go} we obtain
$$
\Irr(U)_{\fC^4}^{p \ge 5}=\{\chi_{b_2, b_6}^{a_{11}, a_{12}, a_{13}} \mid  a_{11}, a_{12}, a_{13} \in \mathbb{F}_q^{\times},  b_2, b_6 \in
\F_q\},
$$
where $\chi_{b_2, b_6}^{a_{11}, a_{12}, a_{13}} = \Ind^{\cA \cup \cI} \Inf_{\cK \cup \cJ} \lambda_{b_2, b_6}^{a_{11}, a_{12}, a_{13}}$.
We have that $\Irr(U)_{\fC^4}^{p \ge 5}$ is
a family of $q^2(q-1)^3$ characters of degree $q^4$.

Now suppose $p=3$.  We have $X'=\{x_{1,3,4, 7}(t) \mid t \in \F_q\}$ and
$Y'=\{x_{8, 9, 10}(s) \mid s \in \mathbb{F}_q\}$, where
$$
x_{1,3,4,7}(t)=x_1(a_{13}t)x_3(-a_{12}t)x_4(a_{11}t)x_7(-a_{11}a_{12}t^2) \text{ and }
$$
$$
x_{8,9,10}(s)=x_8(a_{13}s)x_9(-a_{12}s)x_{10}(a_{11}s).
$$
We can take $\tilde Y = X_5X_8X_9$, and we have $\tilde H/\tilde Y =X_2X_6X'YZ/\tilde{Y}$. By Lemma \ref{pr:reduction},
we have $\Irr(V \mid \lambda)$ is in bijection with $\Irr(\tilde H/\tilde Y \mid \lambda)$.

We continue by considering $\Irr(\tilde H/\tilde Y \mid \lambda)$ and note that $Y'$ lies in
the centre of $\tilde{H}/\tilde{Y}$.  For $a_{8,9,10} \in \F_q^\times$, we let $\lambda^{a_{8,9,10}}$
be the extension of $\lambda$ to $Y'Z$ with $\lambda^{a_{8,9,10}}(x_{8,9,10}(t)) = \phi(a_{8,9,10}t)$ for every $t \in \F_q$.
Then $\Irr(\tilde H/\tilde Y \mid \lambda)$ decomposes as the union of $\Irr(\tilde H/\tilde Y \mid \lambda^{a_{8,9,10}})$
over $a_{8,9,10} \in \F_q^\times$ along with $\Irr(\tilde H/Y \mid \lambda)$.

A computation in
$\tilde{H}/\tilde{Y}$ gives
$$
[x_6(s), x_{1,3,4,7}(t)]=x_{8, 9, 10}(st).
$$
Now by Lemma \ref{lem:rl}, we have that $\Irr(\tilde H/\tilde Y \mid \lambda^{a_{8,9,10}})$
is in bijection with $\Irr(X_2YZ/\tilde Y \mid \lambda^{a_{8,9,10}})$.  Further, we have that
$X_2YZ/\tilde Y \cong X_2Y'Z$ is abelian, and we label the linear characters in
$\Irr(X_2YZ/\tilde Y \mid \lambda^{a_{8,9,10}})$ as $\lambda_{b_2}^{a_{11}, a_{12}, a_{13}, a_{8,9,10}}$ in the usual way.
This gives the family of characters
$$
\Irr(U)_{\fC_4}^{1, p=3}=\{\chi_{b_2}^{a_{11}, a_{12}, a_{13}, a_{8,9,10}} \mid a_{11}, a_{12}, a_{13}, a_{8,9,10} \in \F_q^{\times}, b_2
\in \F_q\},
$$
where by Remark \ref{rem:1go} we have $\chi_{b_2}^{a_{11}, a_{12}, a_{13}, a_{8,9,10}} = \Ind_{X_2X'YX_\cZ X_\cK}^U \Inf_{X_2YZ/\tilde Y}^{X_2X'YX_\cZ X_\cK} \lambda_{b_2}^{a_{11}, a_{12}, a_{13}, a_{8,9,10}}$.
We have that $\Irr(U)_{\fC^4}^{1, p=3}$ consists of $q(q-1)^4$ irreducible characters of degree $q^4$.

It remains to consider $\Irr(\tilde H/Y \mid \lambda)$.  We have $\tilde H/Y = X_2X'X_6YZ/Y$ and
$X_6$ is central in $\tilde H/Y$.
For $a_6 \in \F_q^\times$,  we let $\mu^{a_6} : X_6Z \to \F_q$ be the extension of $\mu:Z \to \F_q$ to $X_6$ defined as usual, and $\lambda^{a_6} \in \Irr(X_6Z)$
be such that $\lambda^{a_6}=\phi \circ \mu^{a_6}$.
Then $\Irr(\tilde H/Y \mid \lambda)$ decomposes as the union of $\Irr(\tilde H/Y \mid \lambda^{a_6})$
over $a_6 \in \F_q^\times$ along with $\Irr(\tilde H/X_6Y \mid \lambda)$.

A computation in $\tilde H/Y$ gives
$$
[x_2(t), x_{1, 3, 4,7}(s)]=x_6(-a_{12}st)x_{11}(a_{12}^2a_{13}s^3t).
$$

We note that the quotient $\tilde H/(Y\ker \mu^{a_6}) = X_2X'X_6YZ/(Y\ker \mu^{a_6})$ is
isomorphic to the three-dimensional group $V_f$ where
$f(s, t)=a_{12}t(a_{11}a_{12}a_{13}s^3-a_6s)$ is as given in \S\ref{sub:3dim}, and
we have that $\Irr(\tilde H/Y \mid \lambda^{a_6})$ is in bijection with $\Irr(\tilde H/(Y\ker \mu^{a_6}) \mid \lambda^{a_6})$.
Thus we can apply the analysis of $\Irr(V_f)$ in \S\ref{sub:3dim}.  We let $d = a_6/a_{11}a_{12}a_{13}$.

Suppose first that $d$ is a square in $\F_q$.
In this case we write $a_{1,6}$ for $a_6$, and we define $e \in \F_q$ such that $e^2=d$.
We let
$$W' = \{x_{1, 3, 4,7}(es) \mid s \in \F_3\} \quad \text{ and } \quad W_2 = \{x_2((e^{-3}/a_{11}a_{12}^2a_{13})t) \mid t \in \F_3\},$$
and
we define $\lambda_{c_{1, 3, 4, 7}, c_2}^{a_{11}, a_{12}, a_{13}, a_{1,6}}$ for $c_{1, 3, 4, 7}, c_2 \in \F_3$ of $W_1W_2X_6YZ/(Y \ker \lambda^{a_{11}, a_{12}, a_{13}, a_{1,6}})$
as in \S\ref{sub:3dim}.
Then we get the family of characters
$$
\Irr(U)_{\fC^4}^{2, 1, p=3} = \{ \chi_{c_{1, 3, 4, 7}, c_2}^{a_{11}, a_{12}, a_{13}, a_{1,6}} \mid  a_{11}, a_{12}, a_{13} \in \F_q^{\times},
a_{1,6} \in a_{11}a_{12}a_{13}S_q,
c_{1, 3, 4, 7}, c_2 \in \F_3\},
$$
where
$$
\chi_{c_{1, 3, 4, 7}, c_2}^{a_{11}, a_{12}, a_{13}, a_{1,6}} = \Ind_{X'W_2X_6YX_\cZ X_\cK}^U \Inf_{W_1W_2X_6YZ/(Y \ker \lambda^{a_{11}, a_{12}, a_{13}, a_{1,6}})}^{X'W_2X_6YX_\cZ X_\cK}
\lambda_{c_{1, 3, 4, 7}, c_2}^{a_{11}, a_{12}, a_{13}, a_{1,6}}
$$
and $S_q$ denotes the set of nonzero squares in $\F_q$.
We have that $\Irr(U)_{\fC^4}^{2, 1, p=3}$ consists of $9(q-1)^4/2$ characters of degree $q^4/3$.

Suppose now that $d$
is a nonsquare in $\F_q$.  In this case we write $a_{2,6}$ for $a_6$. We write $\lambda^{a_{11}, a_{12}, a_{13}, a_{2, 6}}$ for the
linear characters of $X_6YZ/(Y \ker \lambda^{a_{11}, a_{12}, a_{13}, a_{2,6}})$ in the usual notation.  Then we get the family of characters
$$
\Irr(U)_{\fC^4}^{2, 2, p=3} = \{ \chi^{a_{11}, a_{12}, a_{13}, a_{2, 6}} \mid a_{2, 6} \in \F_q^\times \setminus (a_{11}a_{12}a_{13}S_q), a_{11}, a_{12}, a_{13} \in \F_q^{\times}\},
$$
where
$$
\chi^{a_{11}, a_{12}, a_{13}, a_{2, 6}} = \Ind_{X'X_6YX_\cZ X_\cK}^U \Inf_{X_6YZ/(Y \ker \lambda^{a_{11}, a_{12}, a_{13}, a_{2,6}})}^{X'X_6YX_\cZ X_\cK} \lambda^{a_{11}, a_{12}, a_{13}, a_{2, 6}}.
$$
We have that $\Irr(U)_{\fC^4}^{2, 2, p=3}$ consists of $(q-1)^4/2$ characters of degree $q^4$.

Similarly, we can analyse $\Irr(\tilde H/X_6Y \mid \lambda)$ using the arguments for the three-dimensional group $V_f$ where
$f(s, t)=a_{11}a_{12}^2a_{13}s^3t$.
Therefore, we get the family of characters
$$
\Irr(U)_{\fC^4}^{3, p=3} = \{ \chi^{a_{11}, a_{12}, a_{13}} \mid a_{11}, a_{12}, a_{13} \in \mathbb{F}_q^{\times}\},
$$
where the characters are given by
$$
\chi^{a_{11}, a_{12}, a_{13}} = \Ind_{X'X_6YX_\cZ X_\cK}^U \Inf_{X_6YZ/X_6Y}^{X'X_6YX_\cZ X_\cK} \lambda^{a_{11}, a_{12}, a_{13}}.
$$
We have that $\Irr(U)_{\fC^4}^{3, p=3}$ consists of $(q-1)^3$ characters of degree $q^4$.

Putting this together we obtain
$$
\Irr(U)_{\fC^4}^{p=3} = \Irr(U)_{\fC^4}^{1, p=3} \cup \Irr(U)_{\fC^4}^{2,1, p=3} \cup \Irr(U)_{\fC^4}^{2,2, p=3} \cup \Irr(U)_{\fC^4}^{3, p=3}.
$$

\medskip
\noindent {\bf The nonabelian core $\fC^5$.}
This core occurs for  $\Sigma=\{\alpha_{12}, \alpha_{13}, \alpha_{14}\}$, and we
have
\begin{itemize}
\item $\cS=\{\alpha_1,\alpha_3,\alpha_4,\alpha_5,\alpha_6,\alpha_7,\alpha_8,\alpha_9,\alpha_{10},\alpha_{12},\alpha_{13},\alpha_{14}\}$,
\item $\cZ=\Sigma=\{\alpha_{12}, \alpha_{13}, \alpha_{14}\}$,
\item $\cA = \{\alpha_2\}$ and
\item $\cL = \{\alpha_{11}\}$.
\end{itemize}
Using the method of \S\ref{sub:analyse}, we take
\begin{itemize}
\item $Y=X_1X_7X_8X_9$,
\item $X=X_4X_5X_6X_{10}$ and then we have that
\item $H=X_3YZ$.
\end{itemize}
In this case \eqref{eq:commu} is
$$
s_1(-a_{14}t_4^2+a_{12}t_{10})+s_7(-a_{12}t_5+a_{13}t_6)+s_8(a_{12}t_4-2a_{14}t_6)+s_9(a_{13}t_4+a_{14}t_5)=0.
$$

For $p \ge 5$, we have that $Y'=1$, and $Y$ is normal in $\bar H$.  Also we have $\bar H/Y \cong X_3X_\cZ$ is abelian.  For
$b_3 \in \F_q$ we let $\lambda_{b_3}^{a_{12}, a_{13}, a_{14}} \in \Irr(X_{\cS \setminus (\cI \cup \cJ)})_\cZ$ be the linear
character with the usual notation.
Then as explained in Remark \ref{rem:1go} we obtain
$$
\Irr(U)_{\fC^5}^{p \ge 5}=\{\chi_{b_3}^{a_{12}, a_{13}, a_{14}} \mid  a_{12}, a_{13}, a_{14} \in \mathbb{F}_q^{\times},  b_3 \in \mathbb{F}_q\},
$$
where $\chi_{b_3}^{a_{12}, a_{13}, a_{14}} = \Ind^{\cA \cup \cI} \Inf_{\cK \cup \cJ} \lambda_{b_3}^{a_{12}, a_{13}, a_{14}}$.
We have that $\Irr(U)_{\fC^5}^{p \ge 5}$ is a family of $q(q-1)^3$ characters of degree $q^5$.

Now suppose $p=3$.  We have $X'=\{x_{4,5,6,10}(t) \mid t \in \F_q\}$ and $Y'=\{x_{7, 8, 9}(s) \mid s \in \F_q\}$, where
$$
x_{4,5,6,10}=x_4(-a_{14}t)x_5(a_{13}t)x_6(a_{12}t)x_{10}(a_{12}a_{14}t^2) \text{ and }  x_{7, 8, 9}=x_7(a_{14}s)x_8(-a_{13}s)x_9(a_{12}s).
$$
We can take $\tilde Y = X_1X_7X_8$, and we have $\tilde{H}/\tilde{Y}=X_3X'YZ/\tilde{Y}$. By Lemma \ref{pr:reduction},
we have that $\Irr(V \mid \lambda)$ is in bijection with $\Irr(\tilde H/\tilde Y \mid \lambda)$.

A computation in $\tilde{H}/\tilde{Y}$ gives
$$
[x_3(s), x_{4, 5, 6, 10}(t)]=x_{7, 8, 9}(-st).
$$
We notice that $\tilde{H}/\tilde{Y}$ is the direct product of $Z$ and the 3-dimensional group $X_3X'Y/\tilde{Y} \cong X_3X'Y'$. Then the analysis in $\S \ref{sub:3dim}$ applies with $f(s, t)=-st$.

We label the linear characters of $X_3X'Y/\tilde{Y}$ by $\chi_{b_3,b_{4,5,6,10}}$.
By tensoring these characters with $\lambda^{a_{12}, a_{13}, a_{14}}$ and then
applying $\Ind_{\bar H X_\cK}^U \Inf_{\tilde{H}/\tilde{Y}}^{\bar H X_\cK}$ we obtain the
family of characters
$$
\Irr(U)_{\fC^5}^{1, p=3}=\{ \chi_{b_3, b_{4,5, 6, 10}}^{a_{12}, a_{13}, a_{14}} \mid a_{12}, a_{13}, a_{14} \in \F_q^{\times}, b_3, b_{4,
5, 6, 10} \in \F_q \},
$$
which consists of $q^2(q-1)^3$ characters of degree $q^4$.

Let us fix $a_{7, 8, 9} \in \F_q^{\times}$. We write $\lambda^{a_{12}, a_{13}, a_{14},a_{7, 8, 9}}$ for the linear character of $Y'Z$
defined in the usual way.  By applying $\Ind_{X'YX_\cZ X_\cK}^U \Inf_{YZ/\tilde{Y}}^{X'YX_\cZ X_\cK}$
to these linear characters we obtain the family of characters
$$
\Irr(U)_{\fC^5}^{2, p=3}=\{ \chi^{a_{12}, a_{13}, a_{14} a_{7, 8, 9}} \mid a_{12}, a_{13}, a_{14}, a_{7, 8, 9} \in \F_q^{\times}\},
$$
which consists of $(q-1)^4$ characters of degree $q^5$.

We have $\Irr(U)_{\fC^5}^{p=3} = \Irr(U)_{\fC^2}^{1,p=3} \cup \Irr(U)_{\fC^2}^{2,p=3}$.

\appendix

\section*{Appendix: Tables of parametrization of $\Irr(U)$}

This appendix contains a parametrization of the irreducible characters of $U_{\rB_4}$, $U_{\rC_4}$ and $U_{\rF_4}$
when $p$ is not a very bad prime for $U$, that is $p \ne 2$.

The notation in the tables is as follows. The first column corresponds to the families of the form
$\mathcal{F}_\Sigma$, where $\mathcal{F}_\Sigma$ is the family of irreducible characters of
$U$ arising from an antichain $\Sigma$. The second column contains character labels for those
families as explained in Sections \ref{sec:nonc} and \ref{sec:cors}. For a fixed core $(\cS, \cZ, \cA, \cL, \cK)$, we define
$$
I_\cA =\{i \in \{1, \dots, |\Phi^+|\} \mid \alpha_i \in \cA\},
$$
and define $I_{\cL}$ similarly. In case of nonabelian cores,
$I_{\cI}$ and
$I_{\cJ}$ are also defined in the same fashion.
The third column contains $I_{\cA}$ and $I_{\cL}$. We note that $\cK$ can be determined from $\cA$, $\cL$
and the labels of the characters.  For the abelian cores, we recall that the irreducible
characters in the family are obtained by applying $\Ind^\cA \Inf_\cK$ to the linear characters in $\Irr(X_\cS)_\cZ$.
We use the {\bf bold} font to identify nonabelian cores. In these cases, we also use the second column to give any
relation between the indices and the third column to give some information
on the construction of these characters.   In the case where we have $Y' = 1$ and $Y$ is normal in $\bar H$,
we give $I_\cI$ and $I_\cJ$ in the third column, as in this case the irreducible characters
are given by applying $\Ind^{\cA \cup \cI} \Inf_{\cK \cup \cJ}$ to
linear characters in $\Irr(X_{\cS \setminus (\cI \cup \cJ)})_\cZ$.  In other cases, we
refer the reader to the relevant part of \S\ref{ss:f4cors}.
Finally, the fourth column records the number of irreducible characters in a family corresponding
to some character labels, and the fifth column records their degree.

\medskip

\begin{center}
\textbf{Parametrization of the irreducible characters of $U_{\rB_4}$}
\end{center}

 \begin{center} \tiny


 \end{center}


\begin{thebibliography}{99}

\bibitem[An]{An} C. A. M. Andr\'e, {\em The basic character table of the unitriangular group}, J.\ Algebra {\bf 241} (2001), no.\ 1,
437--471.

\bibitem[AN]{AN} C. Andr\'e and A. Neto, {\em A supercharacter theory for the Sylow p-subgroups of the finite symplectic
and orthogonal groups}, J.\ Algebra {\bf 322} (2009), 1273--1294.

\bibitem[Ca]{Ca}
R. W. Carter, {\em Simple Groups of Lie Type}, A Wiley-Interscience
  publication, London, 1972.

 \bibitem[CHEVIE]{CHEVIE}
M. Geck, G. Hiss, F. L\"ubeck, G. Malle and G. Pfeiffer, {\em CHEVIE -- A system for computing
and processing generic character tables for finite groups of Lie type, Weyl groups and Hecke
algebras}, Appl. Algebra Engrg. Comm. Comput. {\bf 7} (1996), 175--210.

\bibitem[CP]{CP} P. Cellini and P. Papi, {\em ${\mathrm ad}$-nilpotent ideal of a Borel subalgebra II}, J.\ Algebra,
{\bf 258} (2002), no.\ 1, 112--121.

\bibitem[DI]{DI} P. Diaconis, I.M. Isaacs, {\em Supercharacters and superclasses for algebra groups}, Trans.\ Amer.\
Math.\ Soc.\ {\bf 360} (2008), no. 5, 2359--2392.

\bibitem[DM]{DM} F. Digne and J. Michel, {\em Representations of Finite Groups of Lie Type}, London Mathematical Society
Student Texts {\bf 21}, Cambridge University Press, Cambridge, 1991.

\bibitem[Ev]{Ev}
A. Evseev,
{\em Reduction for characters of finite algebra groups},
J.\ Algebra {\bf 325} (2011), no.\ 1,  321--351.


\bibitem[GAP3]{GAP3} The GAP Group, {\em GAP -– Groups,  Algorithms, and Programming}, Version 3.4.4, 1997, \\
http://www.gap-system.org.

\bibitem[GLM]{GLM} S. M. Goodwin, T. Le and K. Magaard, {\em The generic character table of a Sylow $p$-subgroup of a finite Chevalley group of type $D_4$}, preprint, arXiv:1508.06937 (2015).

\bibitem[GMR1]{GMR1} S. M. Goodwin, P. Mosch and G. R\"ohrle,
{\em Calculating conjugacy classes in Sylow $p$-subgroups of finite
Chevalley groups of rank six and seven},
LMS J.\ Comput.\ Math.\ {\bf 17} (2014), no.\ 1, 109--122.

\bibitem[GMR2]{GMR2} \bysame,
{\em On the coadjoint orbits of maximal unipotent subgroups of reductive groups}, Transform.\ Groups {\bf 21} (2016), no.\ 2, 399--426.

\bibitem[Hi]{Hi} F. Himstedt, {\em On the decomposition numbers of the Ree groups $^2\mathrm{F}_4(q^2)$ in non-defining characteristic}, J.\ Algebra {\bf 325} (2011), no.\ 1, 364--403.

\bibitem[HH]{HH}
F.~Himstedt and S.~Huang,
{\em On the decomposition numbers of Steinberg's triality groups ${}^3D_4(2^n)$ in odd characteristics},
Comm.\ Algebra {\bf 41} (2013), no.\ 4, 1484--1498.


\bibitem[HLM1]{HLMd4} F. Himstedt, T. Le and K. Magaard, {\em Characters of the Sylow $p$--subgroups of the Chevalley groups $D_4(p^n) $}, J.\ Algebra, {\bf 332} (2011), no.\ 1, 414--427.

\bibitem[HLM2]{HLMsr} \bysame,
{\em On the Characters of the Sylow $p$-Subgroups of Untwisted Chevalley
Groups $\mathrm{Y}_n(p^a)$}, to appear in LMS J.\ Comput.\ Math.\ (2016).

\bibitem[HN]{HN}
F.~Himstedt and F.~Noeske, {\em Decomposition numbers of $\mathrm{SO}_7(q)$ and $\mathrm{Sp}_6(q)$},
J.\ Algebra {\bf 413} (2014), 15--40.

\bibitem[Is1]{Ichar} I. M. Isaacs, {\em Character Theory of Finite Groups}, Dover Books on Mathematics, New York, 1994.

\bibitem[Is2]{Icount} \bysame, 
{\em Counting characters of upper triangular groups}, J.\ Algebra {\bf 315} (2007), no.\ 1, 698--719.

\bibitem[LM1]{LM1} T. Le and K. Magaard,  {\em Representations of unitriangular groups}, Buildings, finite geometries and groups, 163--174,
Springer Proc.\ Math.\ {\bf 10}, Springer, New York, 2012.

\bibitem[LM2]{LM2} \bysame,
{\em On the character degrees of Sylow $p$-subgroups of Chevalley groups $G(p^f)$ of type $E$}, Forum Math.\ {\bf 27} (2015), no.\ 1, 1--55.

\bibitem[PS]{PS} I.~Pak and A.~Soffer,
{\em On Higman's $k(U_n(\F_q))$ conjecture},
preprint, arxiv:1507.00411 (2015).


\end{thebibliography}
\end{document}